\newcounter{commentcounter}
\theoremstyle{plain}
\newtheorem*{theorem*}{Theorem}
\newtheorem*{lemma*} {Lemma}
\newtheorem*{corollary*} {Corollary}
\newtheorem*{proposition*} {Proposition}
\newtheorem{theorem}{Theorem}[section]
\newtheorem{corollary}[theorem]{Corollary}
\newtheorem{proposition}[theorem]{Proposition}
\newtheorem{conjecture}[theorem]{Conjecture}
\newtheorem{question}[theorem]{Question}
\theoremstyle{remark}
\newtheorem*{remark}{Remark}
\theoremstyle{definition}
\def\tpm{[t^{\pm 1}]}
\def\tautwo{\tau^{(2)}}
\def \R {\Bbb{R}}
\def \Z {\Bbb{Z}}
\def \C {\Bbb{C}}
\def \F {\Bbb{F}}
\def\op{\operatorname}
\def\det{\op{det}}
\def \NN{\mathcal{N}} \def \PP{\mathcal{P}}
\def\wti{\widetilde}
\def\sl{\op{SL}}
\def\op{\operatorname}
\def\gl{\op{GL}}
\def\Q{\Bbb{Q}}
\def\id{\op{id}}
\def\Z{\Bbb{Z}}
\def\C{\Bbb{C}}
\def\part{\partial}
\def\ll{\langle}
\def\rr{\rangle}
\def\a{\alpha}
\def\g{\gamma}
\def\tor{\mbox{Tor}}
\def\bp{\begin{pmatrix}}
\def\sm{\setminus}
\def\aut{\mbox{Aut}}
\def\ep{\end{pmatrix}}
\def\bn{\begin{enumerate}}
\def\en{\end{enumerate}}
\def\ba{\begin{array}}
\def\ea{\end{array}}
\def\S{\Sigma}
\def\a{\alpha}
\def\fr12{\frac{1}{2}}
\def\be{\begin{equation} }
 \def\ee{\end{equation}}
\def\co{\colon}
\def\G{\Gamma}
\def\K{\Bbb{K}}
\def\vol{\mbox{Vol}}
\def\genus{\op{genus}}
\def\zt{\Z[t^{\pm 1}]}
\def\modd{\, mod \, }
\def\cmtbf#1{} \def\cmt#1{}
\begin{document}

\title{Three flavors of twisted invariants of knots}

\author{J\'er\^ome Dubois}

\address{Institut de Math\'ematiques de Jussieu - Paris Rive Gauche\\
 Universit\'e Paris Diderot-Paris 7\\
  UFR de Math\'ematiques, B\^atiment Sophie Germain Case 7012\\
  75205 Paris Cedex 13\\
   France}
\email{dubois@math.jussieu.fr}

\author{Stefan Friedl}
\address{Fakult\"at f\"ur Mathematik\\ Universit\"at Regensburg\\93040 Regensburg\\   Germany}
\email{sfriedl@gmail.com}

\author{Wolfgang L\"uck}
\address{Mathematisches Institut\\ Universit\"at Bonn\\
Endenicher Allee 60\\ 53115 Bonn\\ Germany}
\email{wolfgang.lueck@him.uni-bonn.de}
   
\date{\today}

\begin{abstract}
The Alexander polynomial of a knot has been generalized in three different ways to give twisted invariants. The resulting invariants are usually referred to as twisted Alexander polynomials, higher-order Alexander polynomials and $L^2$-Alexander invariants of knots. We quickly recall the definitions and we summarize and compare some of their properties. We also report on work by the authors on $L^2$-Alexander torsions and we conclude the paper with several conjectures on $L^2$-Alexander torsions. 
 \end{abstract}

\maketitle

\section{Introduction}

Alexander \cite{Al28} introduced in 1928 the eponymous  polynomial $\Delta_K(t)\in \zt$ of a knot $K$ in the three--dimensional sphere $S^{3}$. 
In contrast to its mysterious twin, the Jones polynomial, the formal properties and the topological content of the Alexander polynomial and its many generalizations are for the most part well-understood.
For example,  Seifert \cite{Se34} showed that the Alexander polynomial can be normalized such that $\Delta_K(t^{-1})=\Delta_K(t)$ and $\Delta_K(1)=1$,
and that any polynomial satisfying these two conditions can be realized as the Alexander polynomial of a  knot. Furthermore, if $K_{1}$ and $K_{2}$ are oriented knots, then we can consider the connected sum $K_1\# K_2$ and we obtain the following equality
\be \label{equ:deltasum} \Delta_{K_{1}\# K_{2}}(t)=\Delta_{K_{1}}(t)\cdot \Delta_{K_{2}}(t).\ee
In terms of topological information we have for any knot $K$ the inequality
\be \label{equ:degdeltak} \deg(\Delta_K(t))\leq 2\,\mbox{genus}(K),\ee
where $\genus(K)$ denotes the minimal genus of a Seifert surface for $K$.
Also, if $K$ is a fibered knot, then 
\be \label{equ:deltakfib}  \deg(\Delta_K(t))= 2\,\mbox{genus}(K)\mbox{ and 
$\Delta_K(t)$ is monic.}\ee
The Alexander polynomial also contains information on  symmetries of knots \cite{Mu71,Hat81} and on knot concordance \cite{FM66,Ka78,FQ90}.
The original definition of the Alexander polynomial has  been extended to much more general settings and the generalizations of the Alexander polynomial 
have been effectively used in the study of links, 3-manifolds \cite{Mc02}, algebraic varieties and singularities \cite{Di92}.

Nonetheless, it is well-known that in each case the Alexander polynomial has only partial information.
For example, the fact that there exist (infinitely many) non-trivial  knots with trivial Alexander polynomial shows that Alexander polynomials are not a complete invariant and it also shows that Equation~(\ref{equ:degdeltak}) is in general not an equality.
Over the last years a huge effort has been put into finding invariants which generalize the Alexander polynomial and which contain more information, especially more topological information.

Arguably the most important and successful generalization is  Heegaard Floer homology due to Ozsv\'ath--Szab\'o \cite{OS04a,OS04b} and its offspring knot Floer homology  \cite{Ras03,OS04c}
and sutured Floer homology \cite{Ju06}.  
These invariants always detect the knot genus, and more generally the Thurston norm, and they also detect fibered knots and 3-manifolds
\cite{Ni07,Gh08,Ju08,AN09,Ni09a,Ni09b,AlFJ13}. The Heegaard Floer setup has been amazingly effective in dealing with many problems in topology,
it is impossible for us to list all the results. We therefore refer instead to the recent beautiful survey paper by Juh\'asz \cite{Ju13}.
Despite, or arguably because of, the power of Heegaard Floer invariants there are at least two issues. For one, despite some progress \cite{MaOS09,SaW10,LCSVV13} the invariants tend to be difficult to calculate in more complicated situations. Furthermore, the Heegaard Floer invariants can not be generalized to invariants of higher-dimensional manifolds or to invariants of groups.

The quest for useful generalizations of the Alexander polynomial is therefore not over yet. 
In recent years there has been a lot of interest in twisted versions of the Alexander polynomial.
These twisted invariants come in three flavors:
\bn[label=(\alph*)]
\item The \emph{twisted Alexander polynomial} introduced by Lin \cite{Lin01} and Wada \cite{Wa94} associates to an oriented knot {$K$} and a {linear} representation {of its group} $\a\colon \pi_1(S^3\sm K)\to \sl(k,\F)$ over a commutative field $\F$ an invariant $\Delta_K^\a(t)\in \F\tpm$. 
\item The \emph{higher-order Alexander polynomial} of Cochran \cite{Co04} associates to an oriented knot $K$ and an epimorphism $\g\colon \pi_1(S^3\sm K)\to \G$ onto a  torsion-free elementary-amenable group $\G$ an invariant $\Delta_K^\g$ which is a Laurent polynomial with coefficients in a certain skew field.
\item The \emph{$L^2$-Alexander invariant} of Li--Zhang \cite{LiZ06a,LiZ06a} associates to an oriented knot {$K$ in $S^{3}$}  a function $\Delta_{K}^{(2)}\colon \R_{>0}\to \R_{\geq 0}$. 
\en
The goal of this paper is to discuss these three twisted invariants.
More precisely, for each invariant we will outline the definition 
and we will recall some of the key properties, with a special focus on the relationship to the knot genus and fiberedness.

The first two invariants are by now fairly well-known, but the third invariant is hitherto little studied. We will  introduce a slight variation on the $L^2$-Alexander invariant, namely  the  $L^2$-Alexander torsion
 $\tautwo(K)(t)$ of a knot which is by definition a function $\R_{>0}\to \R_{\geq 0}$.
It follows from the definitions that $\tautwo(K)(1)$ equals the usual $L^2$-torsion of the knot exterior which by work of L\"uck--Schick \cite{LS99}
 implies that $\tautwo(K)(1)$ is in fact a repackaging of arguably the most important geometric invariant of a knot complement, namely the volume of a knot. More precisely, given a knot $K$ they showed that 
\[\tautwo(K)(1)=\exp\left(\frac{1}{6\pi}\vol(K)\right),\]
 where we define the  volume of $K$ as 
\[ \vol(K):=\sum \mbox{volumes of the hyperbolic pieces in the JSJ decomposition of {$S^3\sm K$}}.\]
Put differently,
the function $t\mapsto \tautwo(K)(t)$ can be viewed as a deformation of the volume of a knot. Our main goal will then be to make the following conjecture precise and to give some evidence towards it.

\begin{conjecture}\label{conj:l2}
Let $K$ be a knot.
\bn
\item[$(1)$] The full $L^2$-Alexander torsion $\tautwo(K)(t)$ determines the genus of $K$.
\item[$(2)$] The full $L^2$-Alexander torsion determines whether or not $K$ is fibered.
\item[$(3)$]  If $K$ is fibered, then  $\tautwo(K)(t)$  determines the entropy of the monodromy.
\en
\end{conjecture}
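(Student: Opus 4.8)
The plan is to reduce all three statements to a careful analysis of the asymptotic behavior of the function $t \mapsto \tautwo(K)(t)$ as $t \to 0^+$ and as $t \to \infty$, in direct analogy with the way the degree of $\Delta_K(t)$ controls the genus in (\ref{equ:degdeltak}) and (\ref{equ:deltakfib}). Write $X = S^3 \sm K$ for the knot exterior and let $\phi\colon \pi_1(X) \to \Z$ be the abelianization. The central organizing principle is that the relevant topological quantity is the Thurston norm $x_X(\phi)$ of the dual class, which for a nontrivial knot exterior equals $2\genus(K)-1$, since a minimal-genus Seifert surface realizes the norm and has Euler characteristic $1-2\genus(K)$. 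Thus determining the genus is equivalent to extracting $x_X(\phi)$ from $\tautwo(K)$, and the asymptotics are where this information should be stored.

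For part $(1)$, I would first prove that the two one-sided slopes
\[ \lim_{t\to\infty}\frac{\log\tautwo(K)(t)}{\log t} \quad\text{and}\quad \lim_{t\to 0^+}\frac{\log\tautwo(K)(t)}{\log t} \]
exist and that their difference is at most $x_X(\phi)=2\genus(K)-1$. The upper bound is the $L^2$-analog of (\ref{equ:degdeltak}): cut $X$ along a minimal-genus Seifert surface $F$, build the associated CW structure, and estimate the Fuglede--Kadison determinants entering $\tautwo$ from this decomposition. The decisive and harder step is the matching lower bound, which forces the slope difference to realize the full Thurston norm with no loss. This is exactly where the ordinary Alexander polynomial fails—cancellation in the abelianized chain complex produces knots with trivial $\Delta_K$—and the reason to expect success for $\tautwo$ is that it is computed over the group von Neumann algebra $\ng$, which retains the full fundamental group and admits no analogous cancellation. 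Concretely, I would aim to show that the $L^2$-torsion of the sutured manifold obtained by cutting along $F$ is nondegenerate; the main obstacle is precisely this nonvanishing statement, which demands genuine control of the $L^2$-torsions of the cut-open pieces rather than a formal degree count.

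For part $(2)$ I would exploit the fact that $K$ is fibered if and only if cutting $X$ along a minimal-genus Seifert surface yields a product $F\times[0,1]$. The goal is to show that $\tautwo(K)(t)$ has a distinguished ``monic'' shape—equal to $\max(1,t)^{2\genus(K)-1}$ up to a multiplicative constant near both ends, with no bounded fluctuating factor—exactly in the fibered case, mirroring the monicity in (\ref{equ:deltakfib}). The forward direction follows from the explicit product structure of the mapping torus and a clean computation of the torsion of $F\times[0,1]$. The obstacle is the converse: one must show that the monic shape of $\tautwo$ forces the sutured piece to be a product, i.e.\ that triviality of the relevant torsion defect detects the product structure. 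This is again a nondegeneracy statement, intimately tied to the lower bound of part $(1)$.

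For part $(3)$, assume $K$ is fibered with fiber $F$ and pseudo-Anosov monodromy $h\colon F\to F$, so that the entropy is $\log\lambda(h)$ with $\lambda(h)$ the dilatation. The strategy is to read $\tautwo(K)(t)$ through the mapping-torus structure of $X$, where the entire $\R_{>0}$-family is governed by $h$, and to extract the entropy as an asymptotic invariant of the torsion function rather than from its value at a single point. Since $\tautwo(K)(1)=\exp\!\left(\tfrac{1}{6\pi}\vol(K)\right)$ already recovers the volume, the natural expectation is that finer analytic data of $t\mapsto\tautwo(K)(t)$—a suitably normalized growth or derivative quantity—should recover $\log\lambda(h)$. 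This is the most speculative part, and I expect it to be the genuine obstacle: there is no known closed formula expressing the monodromy entropy directly in terms of $\tautwo$, and producing one would require a new bridge between the $L^2$-torsion of a fibered $3$-manifold and the dynamics of its monodromy.
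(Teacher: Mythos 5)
There is a fundamental mismatch here: the statement you are trying to prove is Conjecture \ref{conj:l2}, and the paper contains no proof of it --- it is an open conjecture, which the paper only makes precise (as Conjectures \ref{conj:monomial}, \ref{conj:detectsgenus}, \ref{conj:dilatation} and Question \ref{question:detectsfib}) and supports with partial results (Theorems \ref{thm:tautwofibered} and \ref{thm:lowerboundgenus}, Proposition \ref{prop:l2ordinaryalex}). Your proposal is likewise not a proof: every step you flag as ``the decisive and harder step'', ``the obstacle'', or ``the most speculative part'' is exactly one of the open problems the paper isolates. Concretely: the existence of the two one-sided slopes you want for part $(1)$ is Conjecture \ref{conj:monomial}$(a)$ (monomial in the limit), which is open; even the \emph{upper} bound $\deg(\tautwo(K)(t))\leq 2\genus(K)-1$ is not known for the full torsion --- Theorem \ref{thm:lowerboundgenus} establishes it only for admissible epimorphisms onto \emph{virtually abelian} quotients --- and the matching lower bound for the full torsion is precisely Conjecture \ref{conj:detectsgenus}, for which the paper's best evidence is the existence of \emph{some} virtually abelian quotient realizing degree $2\genus(K)-1$, not a statement about $\id_{\pi_K}$. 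Your appeal to ``no cancellation over $\NN(\G)$'' is a heuristic, not an argument; nondegeneracy of $L^2$-torsions of cut-open sutured pieces is exactly what nobody knows how to prove.

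Two further points where your plan conflicts with, or is weaker than, what the paper records. For part $(2)$, your proposed criterion is that $K$ is fibered if and only if $\tautwo(K)(t)$ has the monic shape $\doteq\max\{1,t\}^{2\genus(K)-1}$ near both ends; the forward direction is indeed Theorem \ref{thm:tautwofibered} (which applies to the full torsion since knot groups are residually finite), but the converse is Question \ref{question:detectsfib}, and the paper explicitly reports computational evidence of Ben Aribi that the answer may be \emph{no}. So your specific route to $(2)$ is likely unsalvageable as stated; if Conjecture \ref{conj:l2}$(2)$ is true, it must be detected by some feature of the function other than monicity. For part $(3)$, the paper is sharper than you are: Conjecture \ref{conj:dilatation} proposes the precise mechanism --- after normalizing $\tautwo(K)(1)=1$, the entropy is read off from the right endpoint of the interval $(0,T)$ on which the function is constant, $\sup\{T\,|\,\tautwo(K)(t)|_{(0,T)}\mbox{ constant}\}=\exp(-h(f))$ --- with the inequality $\geq$ already a consequence of Theorem \ref{thm:tautwofibered} and only $\leq$ open. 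Your proposal of ``a suitably normalized growth or derivative quantity'' never identifies this plateau structure, which is the one concrete candidate formula available.
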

\medskip

The paper is organized as follows.
In Section \ref{section:untwisted} we  recall the recasting of the Alexander polynomial as a torsion invariant, which we refer to as the Alexander torsion. This point of view was introduced by Milnor \cite{Mi62,Mi66} and exploited very successfully by Turaev \cite{Tu86,Tu01,Tu02a}.
In our discussion of twisted invariants we will in fact discuss twisted versions of the Alexander torsion. The differences to the aforementioned Alexander polynomials and invariants are minimal, but it is a well-established fact that Reidemeister
 torsion has better formal properties than  orders of modules. 
In Section \ref{section:twialex} we recall the definition and main properties of the twisted Alexander torsion and  in Section \ref{section:higherorder} 
we do the same for higher-order Alexander torsion of knots.
Finally in Section \ref{section:l2torsion} we will turn our attention to the $L^2$-Alexander torsion of a knot. We first give an outline of the  key properties of the Fuglede-Kadison determinant and  of $L^2$-torsions.
  We then introduce the $L^2$-Alexander torsions of a knot and we state some of the results and computations obtained by the first author and Wegner \cite{DubW10,DubW13}, Ben-Aribi \cite{BA13a,BA13b} and by the authors \cite{DubFL14a,DubFL14b}. We conclude this paper with a long list of open questions on the $L^2$-Alexander torsions of knots. In particular we will discuss Conjecture \ref{conj:l2} in more detail.

\subsection*{Acknowledgments.}
The second author wishes to thank the organizers of the Fourth Conference of the Tsinghua Sanya International Mathematics Forum and in particular the organizers of the workshop `speculations and wild conjectures in low dimensional differential topology'. The second author also gratefully acknowledges the support provided by the SFB 1085 `Higher
Invariants' at the University of Regensburg, funded by the Deutsche
  Forschungsgemeinschaft (DFG).
The first two authors would like to thank IMJ-PRG for its hospitality during the stay of the second author in Paris in March 2012. 
The paper is financially supported by the Leibniz-Award of the third author granted by the  DFG.
We are also grateful to Fathi Ben Aribi  and  Stefano Vidussi for helpful comments.

\section{The Alexander polynomial and Alexander torsion}\label{section:untwisted}

The Alexander polynomial has many different definitions, the equivalence of which is not always entirely obvious. We now recall the definition which for  most theoretical purposes is the most useful and which also lends itself most easily to generalizations to other classes of manifold and groups.

Let $K\subset S^3$ be an oriented knot. 
We denote by  $\nu K$  an open tubular neighborhood of $K$
and we denote by $X_K:=S^3\sm \nu K$ {the knot exterior}. We refer to $\pi_K=\pi_1(S^3\sm \nu K)$ as the group of $K$ and we denote by $\wti{X_K}$ the universal cover of $X_K$. Throughout this paper we always think of $X_K$ as equipped with a CW-structure. Note that $\pi_K$ acts via deck transformations
on the left on $\wti{X_K}$ and thus induces a left $\Z[\pi_K]$-module structure on  $C_*(\wti{X_K}): = C_*(\wti{X_K}; \Z)$.
Using the canonical involution on $\Z[\pi_K]$ we henceforth view
$C_*(\wti{X_K})$ as a right $\Z[\pi_K]$-module.

We denote by $\phi_K\colon\pi_K\to \ll t\rr=\Z$ the abelianization map which sends the oriented meridian of $K$ to $t$. We then consider the chain complex
\[ C_*(\wti{X_K})\otimes_{\Z[\pi_K]}\zt\]
of $\zt$-modules.
Here $g\in \pi_K$ acts on $\zt$ by multiplication by $t^{\phi_K(g)}$.
We then write 
\[ H_k(X_K;\zt)=H_k\left(C_*(\wti{X_K})\otimes_{\Z[\pi_K]}\zt\right).\]
The \emph{Alexander polynomial of $K$} is then defined as the order of the $\zt$-module $H_1(X_K;\zt)$, i.e., it is  defined as
\[ \Delta_K(t):=\op{order}_{\zt}\left(H_1(X_K;\zt)\right).\]
We refer to \cite{Tu01,Hi12} for details on orders. It follows in particular from the theory of orders that $\Delta_K(t)$ is well-defined up to multiplication by a unit in $\zt$, i.e., up to multiplication by an element of the form $\pm t^k$ with $k\in \Z$.

We now turn to the Alexander torsion of a knot. We consider the chain complex
\[ C_*(\wti{X_K})\otimes_{\Z[\pi_K]}\Q(t)\]
of $\Q(t)$-modules.
By  picking  an order of the cells, an orientation of each cell, and by picking a lift of each cell of $X_K$ to $\wti{X_K}$
we can view the above chain complex as a chain complex of based free $\Q(t)$-modules.
{One can further observe that this chain complex is acyclic (see~\cite{Mi62}), so that it is possible to compute its Reidemeister torsion.}
The Alexander torsion of $K$, which is sometimes also referred to as Milnor torsion, is then defined as 
\[ \tau(K)(t):=\tor\left(C_*(\wti{X_K})\otimes_{\Z[\pi_K]}\Q(t)\right)\in \Q(t).\]
We refer to \cite{Mi66,Tu86,Tu01} for more on torsions of chain complexes.
This invariant depends on the choice of the CW-structure, the ordering of the cells, the orientation of the cells and the choice of lifts of the cells to the universal cover.
Nonetheless, it follows from standard arguments that $\tau(K)(t)$ is well-defined up to multiplication by an element of the form $\pm t^k, k\in \Z$. 

The Alexander torsion $\tau(K)(t)$ thus has the same indeterminacy as the Alexander polynomial. In fact Milnor \cite{Mi62,Mi66}  showed that the Alexander  torsion $\tau(K)(t)\in \Q(t)$  satisfies the following equality 
\[ \tau(K)(t)=(1-t)^{-1}\cdot \Delta_K(t).\]
Even though the difference between $\tau(K)(t)$ and $\Delta_K(t)$ is marginal, Turaev \cite{Tu86,Tu01,Tu02a} showed that this  shift in point of view 
greatly simplifies many proofs and that it is `the right point of view'. It is also much easier to generalize $\tau(K)(t)$ to twisted settings and to prove properties of these new invariants.

In the remainder of this section we translate several of the aforementioned properties of the Alexander polynomial into properties of the Alexander torsion. For example, the fact that $\Delta_K(t)$ can be chosen to be symmetric is equivalent to the statement that for any representative 
of $\tau(K)(t)$ we have
\be \label{equ:tausym} \tau(K)(t^{-1})=-t^l\tau(K)(t)\ee
for some odd $l$.
In the following we define the degree of a non-zero polynomial
$p(t)=\sum_{i=k}^l a_it^i$ with $a_k\ne 0$ and $a_l\ne 0$ as $\deg(p(t))=l-k$.
For a non-zero rational function $f(t)=p(t)/q(t)$ we define its degree as
$\deg(f(t))=\deg(p(t))-\deg(q(t))$. We extend this to $\deg(0):=-\infty$. 
Note that with this convention Equality~(\ref{equ:tausym}) implies that 
$\deg(\tau(K)(t))$ is odd. Furthermore,  Inequality~(\ref{equ:degdeltak}) translates into 
\be \label{equ:degtau} \deg(\tau(K)(t))\leq 2\,\mbox{genus}(K)-1.\ee
We furthermore say that a rational function is \emph{monic} if it is the quotient of two monic polynomials, i.e., polynomials for which the top coefficient is $\pm 1$. If $K$ is a fibered knot, then the conditions stated in {Equation}~(\ref{equ:deltakfib}) now translate into
\be \label{equ:taufib}  \deg(\tau(K)(t))= 2\,\mbox{genus}(K)-1\mbox{ and 
$\tau(K)(t)$ is monic.}\ee

\section{Twisted Alexander torsion}\label{section:twialex}

\subsection{Definition}
Let $K\subset S^3$ be an oriented knot and 
let $\a\colon \pi_K\to \sl(k,\F)$ be a representation over a commutative field $\F$.   We consider the chain complex
\[ C_*(\wti{X_K})\otimes_{\Z[\pi_K]}(\F(t)\otimes \F^k)\]
of $\F(t)$-modules, 
where $g\in \pi_K$ acts again on $\F(t)$ by multiplication by $t^{\phi_K(g)}$ and it acts furthermore on $\F^k$ via the representation $\a$. 
We pick a basis for $\F^k$ and lifts of the cells of $X_K$ to $\wti{X_K}$. The  tensor products of the basis elements and the lifts  then turn
the above into a based  $\F(t)$-chain complex. If the above twisted chain complex
is non-acyclic, then we write $\tau(K,\a)(t)=0$.
Otherwise  the \emph{twisted Alexander torsion  of $(K,\a)$} is  defined as 
\[ \tau(K,\a)(t):= \tor\left(C_*(\wti{X_K})\otimes_{\Z[\pi_K]}(\F(t)\otimes \F^k)\right)\in \F(t).\]
Note that  $\tau(K,\a)(t)\in \F(t)$ is
  well-defined up to multiplication by an element of the form $\pm t^l$, $l\in \Z$.
In analogy to (\ref{equ:tausym}) it was shown in \cite{Ki96,HiSW10,FrKK12,Hi12}  that if $\a$ is a unitary representation or if $\a$ is a representation taking values in $\sl(2,\C)$, then for any representative of $\tau(K,\a)(t)$ we have
\[ \tau(K,\a)(t^{-1})=(-t)^l\tau(K,\a)(t)\]
where $l\equiv k\modd 2$. In particular, if $\tau(K,\a)(t)\ne 0$, then the parity of $\deg(\tau(K,\a)(t))$  is the same as the parity of  $k$.  We refer to \cite{Mo11,MS13} for more on degrees of twisted Alexander torsions.

Kitano \cite{Ki96}  showed that the above definition of the twisted Reidemeister torsion  $\tau(K,\a)(t)\in \F(t)$ of an oriented knot $K$
is  equivalent to Wada's invariant \cite{Wa94}, which in turn is closely related to the twisted Alexander torsion introduced by  Lin \cite{Lin01}. 
We refer to \cite{Ki96,KiL99a,FrV10} for the precise relationship between the twisted Reidemeister torsions and various related invariants.

\subsection{Applications and properties}

The twisted Reidemeister torsion  $\tau(K,\a)(t)$ and its generalizations to links, 3-manifolds and groups have been studied extensively over the last years.
These invariants have in particular been applied to  knot concordance \cite{KiL99a,KiL99b,Ta02,HerKL10,ColKL13}, periodicity of  knots \cite{HiLN06,Ell08},
detecting various types of  knots and links \cite{SiW06,FrV07,FrV13},
a certain partial ordering on knots \cite{HoKMS11,HoKMS12} 
and the study of singularities in particular and algebraic geometry in general \cite{CogF07,Coh08}.
We refer to the survey paper \cite{FrV10} for more details. 

In the following we will highlight a few results which have appeared after the survey paper \cite{FrV10} was written and we also highlight a few results which will be of interest to us when we compare the three flavors of twisted invariants.

Alexander polynomials and its generalizations are particularly suitable for the study of the knot genus and fibered  knots. The following theorem says in particular that twisted Alexander torsions detect the genus of a given  knot.

\begin{theorem}\label{thm:twialexdetectsgenus}
Let $K\subset S^3$ be an oriented knot. Then for any representation
$\a\colon \pi_K\to \sl(k,\F)$ over a commutative field  we have
\[ \deg( \tau(K,\a)(t))\leq k(2\genus(K)-1).\]
Furthermore,  there exists a representation
$\a\colon \pi_K\to \sl(k,\F)$ over a commutative field such that 
\[ \deg( \tau(K,\a)(t))=k(2\genus(K)-1).\]
\end{theorem}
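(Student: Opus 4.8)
The plan is to treat the two assertions separately. The inequality is the easy direction and follows from a minimal genus Seifert surface together with a Mayer--Vietoris argument, whereas the realization of equality is the substantial part and forces one to use representations factoring through finite quotients of $\pi_K$.

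For the upper bound I would fix a minimal genus Seifert surface $\Sigma$ for $K$, so that $-\chi(\Sigma)=2\genus(K)-1$, and cut the exterior $X_K$ along $\Sigma$ to obtain a compact manifold $M$ whose boundary contains two copies $\Sigma_\pm$ of $\Sigma$. The infinite cyclic cover associated to $\phi_K$ is then assembled from $\Z$ many translates of $M$ glued along translates of $\Sigma$. Feeding this gluing into the long exact Mayer--Vietoris sequence of the twisted chain complexes produces, over $\F(t)$, a presentation of $H_*(X_K;\F(t)_{\a})$ in which all matrices are built from the inclusion-induced maps $(i_\pm)_*\colon H_*(\Sigma;\F^k)\to H_*(M;\F^k)$, each occurrence of $\Sigma_+$ weighted by $t$. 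Since $\dim_\F H_1(\Sigma;\F^k)$ is controlled by $-\chi(\Sigma)$ once the $H_0$-contributions are cancelled, the torsion $\tau(K,\a)(t)$ is a ratio of determinants of matrices of size $k\,(2\genus(K)-1)$, and its degree is bounded accordingly. This is the twisted analogue of Inequality~(\ref{equ:degtau}), and says exactly that the twisted degree is at most $k$ times the Thurston norm of the generator of $H^1(X_K;\Z)$.

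For the realization of equality I would take $\a$ to factor through a finite quotient $G$ of $\pi_K$, letting $\a$ be the regular representation of $G$ (over $\F=\Q$, by permutation matrices) pulled back along an epimorphism $\pi_K\onto G$. A Shapiro-type induction identifies $\tau(K,\a)(t)$, up to the usual $\pm t^l$ indeterminacy, with the \emph{untwisted} Alexander torsion $\tau(\wti{X})(t)$ of the finite cover $\wti X\to X_K$ corresponding to $\ker(\pi_K\onto G)$, computed with respect to the $\Z$-cover induced by the pullback $\wti\phi:=\pi^*\phi_K$. The key is to choose $G$ so that $\wti X$ \emph{fibers} in the class $\wti\phi$: in a fibered class the untwisted torsion detects the Thurston norm by the fibered equality~(\ref{equ:taufib}), so $\deg(\tau(\wti X)(t))=\|\wti\phi\|_T$. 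By Gabai's multiplicativity of the Thurston norm under finite covers, $\|\wti\phi\|_T=d\cdot\|\phi_K\|_T=d\,(2\genus(K)-1)$, where $d=[G:1]=|G|$ is the covering degree, and since $\dim\a=|G|=d$ this yields $\deg(\tau(K,\a)(t))=d\,(2\genus(K)-1)=k\,(2\genus(K)-1)$.

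The main obstacle is precisely producing a finite quotient $G$ for which the pulled-back class $\wti\phi$ becomes fibered. This is where the deep input enters: knot exteriors satisfy the RFRS condition (via the work of Agol, Wise and Przytycki--Wise), so Agol's virtual fibering theorem guarantees a finite cover in which $\wti\phi$ lies in the cone over a fibered face; this is the mechanism by which Friedl--Vidussi proved that twisted Alexander torsions detect the Thurston norm \cite{FrV10}. Two technical points remain routine: first, the Shapiro identification must be carried out with the correct $\Z$-action, which is standard; second, the regular representation lands in $\gl(k,\F)$ with determinant $\pm1$ rather than in $\sl(k,\F)$, and this is repaired harmlessly by replacing $\a$ with $\a\oplus\a$, which doubles $k$ and the degree simultaneously and so preserves the equality $\deg(\tau(K,\a)(t))=k\,(2\genus(K)-1)$.
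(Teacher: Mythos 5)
Your first paragraph (the upper bound) is sound and is essentially the argument behind the inequality as proved in the paper's sources \cite{Tu02a,FrK06,Fr14}: cut along a minimal genus Seifert surface and bound the size of the matrices presenting the twisted homology over $\F(t)$; the only missing bookkeeping is the handling of the $H_0$- and $H_2$-contributions in the alternating product defining the torsion.

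The realization half, however, contains a genuine gap, and it sits exactly at the step you call ``the key''. You want a finite quotient $\pi_K\onto G$ such that the induced finite cover $\wti{X}\to X_K$ fibers in the pulled-back class $\wti{\phi}=\pi^*\phi_K$. Such a cover can never exist unless $K$ is already fibered: by Stallings' fibration criterion, $\wti{\phi}$ is a fibered class if and only if $\ker(\wti{\phi})=\ker(\phi_K)\cap \pi_1(\wti{X})$ is finitely generated, and since this kernel has finite index in $\ker(\phi_K)$, its finite generation forces $\ker(\phi_K)$ to be finitely generated, i.e. $K$ fibered. (For fibered $K$ the theorem is immediate anyway: take $k=1$, the trivial representation, and use the fibered equality~(\ref{equ:taufib}).) Accordingly, Agol's theorem \cite{Ag08} does not, and cannot, produce what you ask for: it produces a cover in which $\wti{\phi}$ is only \emph{quasi-fibered}, i.e. lies in the \emph{closure} of the cone over a fibered face of the Thurston norm ball. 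The actual proof in \cite{FrV12b}, which is what the paper cites for this statement, proceeds from that weaker conclusion: one uses that the Alexander-type norm and the Thurston norm are (semi-)norms which agree on open fibered cones --- by the fibered equality you invoke --- and hence, by continuity and homogeneity, agree on their closures; this, combined with the nonvanishing of the relevant torsions, Shapiro's lemma, and Gabai's multiplicativity of the Thurston norm under finite covers (ingredients you correctly identify), yields the degree equality for the quasi-fibered class and then, descending, for $K$. So your outline follows the right overall strategy, but the passage from ``quasi-fibered'' to ``fibered'' is not a technicality that can be elided --- it is false for every non-fibered knot --- and the continuity-of-norms argument that replaces it is an essential additional idea.
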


Here, the first statement is proved in   \cite{Tu02a,FrK06,Fr14} whereas the second statement is proved in \cite{FrV12b}. The latter result builds
 on the Virtually Compact Special Theorem of  Wise \cite{Wi09,Wi12a,Wi12b} and  Przytycki--Wise \cite{PW12}  and on the Virtual Fibering Theorem
 of Agol \cite{Ag08} (see also \cite{FrKt14}).

Similarly we have the following theorem which says that twisted Alexander torsions detect whether or not a given  knot is fibered.

\begin{theorem}\label{thm:twialexdetectsfib}
Let $K\subset S^3$ be an oriented knot. If $K$ is fibered, then for any representation
$\a\colon \pi_K\to \sl(k,\F)$ over a commutative field we have
\[ \deg( \tau(K,\a)(t))=k(2\genus(K)-1)\]
and $\tau(K,\a)(t)$ is monic. Conversely, if $K$ is not fibered, then 
 there exists a representation
$\a\colon \pi_K\to \sl(k,\F)$ over a commutative field such that 
\[  \tau(K,\a)(t)=0.\]
\end{theorem}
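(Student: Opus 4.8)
The plan is to prove Theorem~\ref{thm:twialexdetectsfib} by leveraging the genus-detection results of Theorem~\ref{thm:twialexdetectsgenus} together with the known fibering criterion for twisted Reidemeister torsion. The theorem has two directions, and I would treat them separately, as they rely on rather different machinery.

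\textbf{The forward direction} (fibered implies monic of maximal degree). Suppose $K$ is fibered, so that the knot exterior $X_K$ fibers over $S^1$ with fiber a Seifert surface $\Sigma$ of genus $g=\genus(K)$ and monodromy a self-homeomorphism $\varphi\colon \Sigma\to\Sigma$. First I would use the standard fact that a fibered $X_K$ is homotopy equivalent to the mapping torus of $\varphi$, so that the infinite cyclic cover has the homotopy type of the fiber $\Sigma$, a finite complex. For \emph{any} representation $\alpha\colon\pi_K\to\sl(k,\F)$, the twisted chain complex $C_*(\wti{X_K})\otimes_{\Z[\pi_K]}(\F(t)\otimes\F^k)$ can then be computed via the fibration, and its torsion is expressible as a characteristic-polynomial-type expression $\det(t\cdot\id - \varphi_*)$ of the induced map on the twisted homology of the fiber. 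Because the fiber is closed of Euler characteristic $2g-1$ (for the exterior), each homological degree contributes $k$ copies scaled by the fiber, and the determinant of an invertible matrix over $\F$ is a monic polynomial up to the unit $\pm t^l$; tracking the total degree gives exactly $\deg(\tau(K,\alpha)(t))=k(2\genus(K)-1)$, with monicity following from the invertibility of the monodromy on twisted chains. I would cite the fibering criterion of \cite{Ch03,GKM05,FrK06} (as recalled in \cite{FrV10}) to make the ``monic of maximal degree'' conclusion precise.

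\textbf{The converse direction} (not fibered yields a vanishing representation) is the harder and more interesting half, and here I would reduce to the genus-detection statement already granted. The key observation is that the \emph{vanishing} $\tau(K,\alpha)(t)=0$ occurs precisely when the twisted chain complex fails to be acyclic over $\F(t)$, equivalently when the twisted Alexander module has positive rank. By the work underlying Theorem~\ref{thm:twialexdetectsgenus}, which builds on Agol's Virtual Fibering Theorem \cite{Ag08} and the Virtually Compact Special Theorem of Wise and Przytycki--Wise \cite{Wi09,Wi12a,Wi12b,PW12}, the fundamental group $\pi_K$ is virtually residually finite rationally solvable (RFRS) and virtually special; these structural results produce a plentiful supply of finite-image representations $\alpha$ realizing the genus bound sharply. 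The strategy is then contrapositive: if \emph{no} representation made the torsion vanish, the knot would satisfy the full twisted fibering criterion for every $\alpha$, and one invokes the converse half of the fibering detection theorem (the statement that monic maximal-degree twisted torsion for a sufficiently rich family of representations forces fiberedness, cf. \cite{FrV12b}) to conclude that $K$ is fibered, contradicting the hypothesis. Thus non-fiberedness must be witnessed by at least one $\alpha$ with $\tau(K,\alpha)(t)=0$.

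\textbf{The main obstacle} I anticipate is the converse direction, specifically constructing or guaranteeing the existence of the single vanishing representation. The delicate point is that the existence statement is non-constructive and rests entirely on the deep geometric group theory inputs (RFRS plus virtual speciality) that let one approximate the Thurston norm by twisted degrees; one must carefully ensure that the failure of fiberedness is detected at a \emph{finite} level, i.e.\ by an honest finite-dimensional representation over a field $\F$ rather than merely in an asymptotic or $L^2$-sense. I would expect to route this through the relationship between non-fiberedness and the non-monic or non-maximal behavior of the associated higher-order or Cochran-type invariants, then transfer that failure to a concrete $\sl(k,\F)$ representation via the virtual fibering dictionary. Establishing that this transfer yields genuine acyclicity failure (hence $\tau=0$) rather than merely a degree defect is the technical crux, and it is exactly where the results of \cite{FrV12b} are indispensable.
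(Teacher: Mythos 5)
Your forward direction is, in outline, the argument of \cite{GKiM05} (see also \cite{Ch03,FrK06}) that the paper cites for this half: compute the twisted torsion of the mapping torus as an alternating product of characteristic polynomials of the monodromy acting on the twisted homology of the fiber. Two corrections are needed but are repairable: the fiber is a compact surface with one boundary component, not closed, and its Euler characteristic is $1-2\genus(K)$, not $2\genus(K)-1$ (the number $2\genus(K)-1$ arises as minus the Euler characteristic, i.e.\ from $\rank H_1(\S)-\rank H_0(\S)$); and monicity follows because the top coefficient of each $\det\bigl(t\,\a(\varphi_*)-\mathrm{id}\bigr)$ is the determinant of an invertible matrix, hence a unit.

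The converse direction contains a genuine gap: your argument is circular. You propose to prove the contrapositive ``if $\tau(K,\a)(t)\ne 0$ for every $\a$, then $K$ is fibered'' by invoking ``the converse half of the fibering detection theorem,'' namely that monic maximal-degree twisted torsion for a sufficiently rich family of representations forces fiberedness. But that invoked statement \emph{is}, up to contraposition, the theorem you are trying to prove; in the literature the fibering detection result of Friedl--Vidussi is exactly the assertion that non-fiberedness is witnessed by a vanishing twisted torsion, and there is no independent, previously available ``converse criterion'' to quote. (Your citation \cite{FrV12b} is the Thurston-norm paper underlying Theorem \ref{thm:twialexdetectsgenus}; the fibering half is \cite{FrV12a}, see also \cite{FrV08,FrV11}, which is precisely what the paper cites here.) Moreover, even granting such a criterion, your intermediate step is a non sequitur: non-vanishing of $\tau(K,\a)(t)$ for all $\a$ does not imply that each $\tau(K,\a)(t)$ is monic of maximal degree, so the hypothesis of the criterion would not be satisfied; and Theorem \ref{thm:twialexdetectsgenus} cannot substitute, since it only produces representations realizing a degree bound and says nothing about acyclicity or monicity. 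The actual proof is of a different nature: one takes a minimal genus Seifert surface $\S$, uses Gabai's sutured manifold theory to express non-fiberedness as the failure of the complementary sutured manifold to be a product, and then uses separability results coming from virtual specialness (Wise \cite{Wi09,Wi12a,Wi12b}, Przytycki--Wise \cite{PW12}, Agol \cite{Ag08}) to detect that failure in a \emph{finite} quotient of $\pi_K$; the representation induced from this finite quotient then has non-acyclic twisted chain complex, i.e.\ $\tau(K,\a)(t)=0$. This finite-level detection of non-fiberedness is the crux, and nothing in your outline supplies it.
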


Here, the first part of the theorem  was  shown in \cite{GKiM05} (see also \cite{GM03,Ch03,Fr14})
and the second part  was shown in \cite{FrV12a} (see also \cite{FrV08,FrV11}), the proof of which  again builds  on the recent work of Wise \cite{Wi09,Wi12a,Wi12b}. 

The alert reader will have noticed that neither Theorem \ref{thm:twialexdetectsgenus}
nor Theorem \ref{thm:twialexdetectsfib} specifies the representation which detects the genus and which detects non-fiberedness.
This is not a problem for some of the theoretical applications of the theorems, e.g. in the applications to 
splittings of knot groups \cite{FrSW13}, symplectic 4-manifolds \cite{FrV08,FrV11,FrV12a}, rank gradients of 3-manifold groups \cite{DFV14}
and fundamental groups of non-fibered knots \cite{SiW09a,SiW09b}.

It is also straightforward to see that both theorems give rise to an algorithm which determines the genus and the fiberedness of a given  knot.
We refer to \cite{FrV12b} for details.

\subsection{Questions and conjectures}

As we mentioned in the previous section,  neither Theorem \ref{thm:twialexdetectsgenus}
nor Theorem \ref{thm:twialexdetectsfib} specifies the representation which has the desired property. If we want  efficient algorithms for determining the knot genus and fiberedness  it would be helpful to have more precise information regarding the representations.
The following conjecture was formulated in \cite{DunFJ12}.

\begin{conjecture}\label{conj:dfj12}
Let $K\subset S^3$ be a hyperbolic oriented knot. 
Let $\a\colon \pi_K\to \sl(2,\C)$ be a lift of the discrete and faithful representation, then
\[ \deg (\tau(K,\a)(t))=2(2\genus(K)-1).\]
Furthermore, $K$ is fibered if and only if $\tau(K,\a)(t)$ is monic.
\end{conjecture}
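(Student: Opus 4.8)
The plan is to separate the degree equality from the fiberedness criterion and, in each, to use Theorems~\ref{thm:twialexdetectsgenus} and~\ref{thm:twialexdetectsfib} to reduce the statement to a single extremal property of the holonomy representation $\a$. For the degree equality, Theorem~\ref{thm:twialexdetectsgenus} already supplies the upper bound $\deg(\tau(K,\a)(t))\le 2(2\genus(K)-1)$, so the only genuine content is the matching lower bound together with the non-vanishing $\tau(K,\a)(t)\ne 0$ (equivalently, acyclicity of the twisted complex over $\C(t)$, since by the definition in Section~\ref{section:twialex} a non-acyclic complex forces $\tau=0$). When $K$ is fibered, both the degree equality and monicness follow at once from the first part of Theorem~\ref{thm:twialexdetectsfib} applied to $\a$; so I would first dispose of that case and then assume throughout that $K$ is hyperbolic and non-fibered.

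For the lower bound I would pass to a sutured-manifold description of the genus. Fix a minimal-genus Seifert surface $\Sigma$ for $K$, so that $-\chi(\Sigma)=2\genus(K)-1$, and cut $X_K$ open along $\Sigma$ to obtain a taut sutured manifold $(M,\gamma)$ with $R_\pm$ two copies of $\Sigma$. Following the mechanism behind \cite{FrV12b}, the extreme coefficients of $\tau(K,\a)(t)$ are controlled by the $\a$-twisted homology of the pair $(M,R_\pm)$, and the degree attains its maximal value $2(2\genus(K)-1)$ exactly when a certain defect twisted homology group vanishes. The problem thus reduces to showing that the restriction of $\a$ to the incompressible subsurface groups carried by $\Sigma$, and to the guts of $(M,\gamma)$, has no nonzero invariants or coinvariants, so that the relevant $H_0$ and $H_2$ vanish.

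For the fiberedness criterion the forward direction is again the first part of Theorem~\ref{thm:twialexdetectsfib}. For the converse I would argue that monicness of $\tau(K,\a)(t)$ forces the leading-coefficient obstruction, and hence the $\a$-twisted homology of the guts of $(M,\gamma)$, to vanish; combined with the sutured-manifold characterization of fiberedness (that $K$ is fibered precisely when $(M,\gamma)$ is a product sutured manifold) and the virtual fibering input of Agol \cite{Ag08} together with Wise \cite{Wi09} and Przytycki--Wise \cite{PW12}, this should rule out any nonproduct guts, so that $K$ is fibered. In effect, the aim is to upgrade the \emph{existential} detection of \cite{FrV12a,FrV12b} to the \emph{specific} representation $\a$.

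The hard part is precisely this upgrade. The existential detection theorems produce some finite-image representation by pulling back a representation from a virtual fibration and averaging, a move that is unavailable for the single, rigid holonomy representation. Controlling the restriction of $\a$ to subsurface and guts subgroups, and ruling out an accidental (non-)vanishing of the twisted homology groups above, therefore appears to require genuinely hyperbolic-geometric input rather than the RFRS and special-cube-complex machinery used for the existential statements. One promising route is to work along the canonical component of the $\sl(2,\C)$-character variety containing the discrete and faithful character, where generic representations restrict nondegenerately to subsurface groups, and then to establish by a semicontinuity-and-specialization argument that neither the degree drops nor monicness fails at the geometric point; showing that no such jump occurs at this distinguished point is, I expect, the principal obstacle.
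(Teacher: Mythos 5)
You are attempting to prove what is, in this paper, an open problem: the statement is Conjecture \ref{conj:dfj12}, the Dunfield--Friedl--Jackson conjecture from \cite{DunFJ12}. The paper contains no proof of it and explicitly treats it as unresolved; the only support it records is verification for all hyperbolic knots up to 15 crossings and partial evidence in \cite{Mo12,KiKM13,MoT13,Tr13}. So there is no ``paper's own proof'' to compare against, and any submission must be judged purely on whether it closes the argument on its own.

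It does not, and you essentially say so yourself. The parts of your plan that are solid are exactly the parts that were already known: the upper bound $\deg(\tau(K,\a)(t))\le 2(2\genus(K)-1)$ from Theorem \ref{thm:twialexdetectsgenus}, and the implication ``fibered $\Rightarrow$ degree equality and monicness'' from the first half of Theorem \ref{thm:twialexdetectsfib}. Everything else --- non-vanishing of $\tau(K,\a)(t)$, the matching lower bound on the degree, and the converse ``monic $\Rightarrow$ fibered'' --- is precisely the content of the conjecture, and your proposal defers each of these to claims that are themselves unproven: that the $\a$-twisted homology of the sutured pieces and guts of a cut-open minimal-genus Seifert surface vanishes, and that a ``semicontinuity-and-specialization'' argument along the canonical component of the character variety prevents a degree drop or loss of monicness at the discrete faithful character. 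Neither claim can be cited or readily established. The detection theorems of \cite{FrV12a,FrV12b} produce finite-image representations pulled back from finite covers via \cite{Ag08,Wi09,PW12}; as you note, that machinery is structurally unavailable for a single rigid representation, but you offer no replacement for it. Moreover, semicontinuity works against you here: degrees of twisted Alexander polynomials can drop, and top coefficients can degenerate, under specialization within a family of representations, so genericity along the canonical component gives no control at the one point you care about --- which is exactly why the conjecture is hard. As written, the proposal is a reasonable research program whose principal obstacle (named correctly in your last sentence) is left entirely open, so it is not a proof.
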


 A proof of this conjecture would result in an extremely fast algorithm for determining the knot and fiberedness of a hyperbolic knot.
In \cite{DunFJ12} the conjecture was verified for all hyperbolic knots up to 15 crossings. 
Further positive evidence towards this conjecture was also given in \cite{Mo12,KiKM13,MoT13,Tr13}. We refer to \cite{DunFJ12} for various other conjectures and open questions regarding the twisted Alexander torsion of hyperbolic knots. 

An elementary satellite knot construction shows that for any Alexander polynomial of a knot there exist infinitely many distinct knots with the same Alexander polynomial. Put differently, Alexander polynomials are far from being a complete invariant for knots. One of the first uses of twisted Alexander torsions, see \cite{Lin01,DunFJ12}, was to show that they can be used to distinguish the Conway knot from the Kinoshita-Terasaka knot. The following question is asked in \cite[Section~6.6.1]{Hi12}.

\begin{question}
Is it possible to distinguish any two distinct prime knots using twisted Alexander torsions?
\end{question}

Note that it requires some thought to make this question precise, since the twisted Alexander torsion $\tau(K,\a)(t)$ is an invariant of a knot together with a representation $\a$. 
Some evidence for a positive answer is provided by  \cite{SiW06,FrV07,FrV13} 
where it is shown that twisted Alexander torsions detect the unknot, the trefoil and the Figure-8 knot.  
We also refer to \cite{Ei07} for related ideas.

\section{Noncommutative invariants}\label{section:higherorder}

\subsection{Definition}

Given a knot  $K\subset S^3$ we say that an epimorphism
 $\g\colon \pi_K\to \G$ 
is \emph{admissible} if the abelianization $\phi_K\colon \pi_K\to H_1(X_K;\Z)\cong \Z$
factors through $\g$. Note that by replacing  $\g\colon \pi_K\to \G$ with $\g\times \phi_K$ we can turn any homomorphism into an admissible homomorphism.
Admissibility is thus not a big restriction.
Throughout this section  let $\g\co \pi_K\to \G$
be  an admissible epimorphism to a torsion-free elementary-amenable  group.
Since $\G$ is torsion-free and elementary-amenable the ring
$\Z[\G]$ admits by \cite{DLMSY03,KrLM88} an  Ore localization which we denote by $\K(\G)$. We then consider the chain complex
\[ C_*(\wti{X_K})\otimes_{\Z[\pi_K]}\K(\G)\]
of right $\K(\G)$-modules, 
where $g\in \pi_K$ acts on $\K(\G)$ via left multiplication by $\g(g)$.
We again pick a  lift of each cell in $X_K$ to $\wti{X_K}$. The corresponding cells then turn the above into a based  chain complex of right $\K(\G)$-modules.
If the chain complex is not acyclic, then we define $\tau(K,\g):=0$.
Otherwise we consider the corresponding Whitehead-Reidemeister torsion
\[ \tau(K,\g):= \tau\left(C_*(\wti{X_K})\otimes_{\Z[\pi_K]}\K(\G)\right)\,\,\in\,\,  K_1(\K(\G)).\]
We refer to \cite{Ro94} for the definition of the $K_1$-group of a ring.
Note that if we write $\K(\G)^\times=\K(\G)\sm \{0\}$, then the Dieudonn\'e {determinant}
 induces  by \cite{Ro94} a canonical isomorphism 
\[ K_1(\K(\G))\xrightarrow{\cong}\K(\G)^\times_{ab}:=\K(\G)^\times/[\K(\G)^\times,\K(\G)^\times].\]
The \emph{higher-order Alexander torsion  of $(K,\a)$} is then defined as the image of $\tau(K,\g)\in \{0\}\cup \K(\G)^\times_{ab}$. It is well-defined up to multiplication by an element of the form $\pm g, g\in \G$. 
The higher-order Alexander torsion was first defined in \cite{Fr07}, it is a slight variation of the higher-order Alexander polynomial introduced by Cochran \cite{Co04} and Harvey \cite{Har05}. 

It is perhaps puzzling at first glance that no `$t$' appears in this definition at all. It is thus a priori not clear why this should be viewed as a higher-order \emph{Alexander} torsion. Also note that the higher-order Alexander torsion takes values in a rather unwieldy algebraic object.  
This has caused serious problems and in fact the only useful invariant which has ever been extracted is the degree of $\tau(K,\g)$. 
In the following we will thus recall the definition of $\deg(\tau(K,\g))$.

By the admissibility of $\g$ the abelianization epimorphism $\phi_K\colon \pi_K\to \Z$ factors through $\g$. We denote the resulting epimorphism $\G\to \Z$ by $\phi_K$ as well.
 Given a non-zero $p=\sum_{g\in \G} a_gg\in \Z[\G]$ we now write
\[ \deg(p):=\max\{ \phi_K(g)-\phi_K(h)\,|\,a_g\ne 0\mbox{ and }a_h\ne 0\}.\]
Furthermore, for $pq^{-1}\in \K(\G)$ with $p,q\in \Z[\G]$ and $p,q\ne 0$ we define
\[ \deg(pq^{-1})=\deg(p)-\deg(q).\] 
We again extend this to $\deg(0):=-\infty$. It follows from $\deg(fg)=\deg(f)+\deg(g)$ that 
\[ \deg(\tau(K,\g))\in \Z \cup \{-\infty\}\]
is well-defined. As an example, if we take  $\g=\phi_K$ to be  the abelianization, then $\tau(K,\phi_K)=\tau(K)(t)$ `on the nose', and $\deg(\tau(K,\phi_K))=\deg(\tau(K)(t))$. 

The fact that $\tau(K,\g)$ always has a degree is justification enough for us to refer to it as an Alexander invariant. In \cite{FrK08,FrKK12} Equality~(\ref{equ:tausym}) was generalized to any $\g$, more precisely, it was shown that $\deg(\tau(K,\g))$ is always odd.

\subsection{Applications and properties}

The higher-order invariants, and its generalizations to more general manifolds have been studied in the context of knot concordance \cite{CocT08}, singular plane curves \cite{LeM06,LeM08}, Morse-Novikov theory \cite{Kiy10} and homology cobordisms of surfaces \cite{Sa06,Sa08}.
Arguably the greatest interest in higher-order invariants stems from their connection to the knot genus and the Thurston norm.
It is perhaps not entirely surprising that higher-order Alexander torsions give lower bounds on the knot genus. Indeed,  the following theorem was proved in \cite{Co04}, with generalizations and extensions given in \cite{Tu02b,Har05,FrH07,Fr07}.

\begin{theorem}\label{thm:higherordergenus}
Let $K\subset S^3$ be an oriented knot. Then for any admissible
epimorphism $\g\colon \pi_K\to \G$ onto a torsion-free elementary-amenable group  we have
\[ \deg( \tau(K,\g))\leq 2\genus(K)-1.\]
\end{theorem}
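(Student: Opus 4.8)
The plan is to transplant the classical proof that $\deg(\tau(K)(t))\le 2\genus(K)-1$ into the language of the Ore field $\K(\G)$. If the complex $C_*(\wti{X_K})\otimes_{\Z[\pi_K]}\K(\G)$ fails to be acyclic, then $\tau(K,\g)=0$ and $\deg(\tau(K,\g))=-\infty$, so the inequality is immediate; hence I assume acyclicity throughout. Fix a Seifert surface $\Sigma$ of minimal genus $g=\genus(K)$, so that $\Sigma$ is connected with one boundary component and $b_1(\Sigma)=2g$. Thicken $\Sigma$ to a bicollar $\Sigma\times[-1,1]\subset X_K$ and let $N$ denote the result of cutting $X_K$ along $\Sigma$. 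Recovering $X_K$ from $N$ by regluing the two boundary copies $\Sigma_+$ and $\Sigma_-$ yields the Mayer--Vietoris short exact sequence of based $\K(\G)$-chain complexes
\[ 0\to C_*(\Sigma;\K(\G))\xrightarrow{\ f\ } C_*(N;\K(\G))\to C_*(\wti{X_K})\otimes_{\Z[\pi_K]}\K(\G)\to 0, \]
in which $f$ is the difference $i_+-i_-$ of the two inclusions $\Sigma\hookrightarrow N$.

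The decisive point is a \emph{degree} statement about this sequence. Every loop in $\Sigma$, and every loop in $N$, is disjoint from $K$ and hence has linking number zero with $K$; therefore $\phi_K$ vanishes on the images of $\pi_1(\Sigma)$ and $\pi_1(N)$. Consequently the differentials of $C_*(\Sigma;\K(\G))$ and $C_*(N;\K(\G))$ involve only elements of $\ker(\phi_K\colon\G\to\Z)$ and so have $\phi_K$-degree $0$, and the \emph{entire} $\phi_K$-degree of the situation is concentrated in $f$: the two pushoffs differ by the meridian, so one of the two inclusions carries a factor of $\mu:=\g(\text{meridian})$ with $\phi_K(\mu)=1$. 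In degree $1$ this presents $f$ as a square ``Seifert pencil'' $A_- - \mu A_+$, where $A_\pm$ have entries of $\phi_K$-degree $0$ and size $b_1(\Sigma)=2g$.

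Because $C_*(\wti{X_K})\otimes\K(\G)$ is acyclic, $f$ is a quasi-isomorphism of finite based complexes and $\tau(K,\g)$ equals the torsion $\tau(f)$ of this quasi-isomorphism. Choosing bases adapted to the $0$- and $1$-chains, $\tau(f)$ is computed from the degree-$1$ part of $f$, namely the pencil, corrected by the $0$-chain part coming from $H_0$; concretely one obtains
\[ \tau(K,\g)\doteq \Det{D}{A_- - \mu A_+}\cdot (1-\mu)^{-1}, \]
the second factor being the noncommutative incarnation of the denominator in Milnor's identity $\tau(K)(t)=(1-t)^{-1}\Delta_K(t)$. Applying the degree homomorphism $\deg\colon\K(\G)^\times_{ab}\to\Z$ recalled above, the numerator is the determinant of a $2g\times 2g$ matrix whose entries have $\phi_K$-degree at most $1$, hence has degree at most $2g$, while the denominator contributes $-\deg(1-\mu)=-1$; this gives $\deg(\tau(K,\g))\le 2g-1$ as claimed.

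The step I expect to be the genuine obstacle is the degree estimate for the Dieudonn\'e determinant $\Det{D}{A_- - \mu A_+}$. Over a skew field one cannot expand a determinant as a signed sum of products of entries, so the naive ``$2g$ columns of degree $\le 1$'' bound is not literally available: Gaussian elimination produces pivots involving inverses $bc^{-1}d$ whose degrees are not a priori controlled. The clean way around this is to realize $\K(\G)$ as the Ore field of the skew Laurent polynomial ring $\K(\ker\phi_K)[\mu^{\pm1};\sigma]$, where $\sigma$ is conjugation by $\mu$; under this identification $\deg$ becomes the $\mu$-degree, and the desired estimate becomes the skew analogue of the statement that a matrix pencil of size $n$ has determinant of $\mu$-degree at most $n$. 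Verifying that $\deg$ behaves like a discrete valuation, so that the $\mu$-degree does not increase under the elimination computing the Dieudonn\'e determinant, is the technical heart of the argument; this is exactly where the torsion-freeness and elementary-amenability of $\G$ enter, guaranteeing that the Ore localization $\K(\G)$ exists and is a skew field.
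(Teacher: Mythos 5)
A preliminary remark on the comparison: the paper does not prove Theorem \ref{thm:higherordergenus} at all --- it attributes the result to \cite{Co04}, with extensions in \cite{Tu02b,Har05,FrH07,Fr07} --- so your proposal has to be measured against those proofs. Your outline (cut along a minimal genus Seifert surface $\Sigma$, feed the Mayer--Vietoris sequence into the torsion, and bound the degree of a Dieudonn\'e determinant over the Ore field of the skew Laurent ring $\K(\ker\phi_K)[\mu^{\pm 1};\sigma]$) is indeed the general shape of those arguments, and the lemma you isolate as the ``technical heart'' --- degree bounds for Dieudonn\'e determinants of matrix pencils --- is a true statement, proved in \cite{Har05,Fr07} essentially by the elimination you sketch. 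One caveat there: the span-degree $\deg$ is \emph{not} a valuation (the inequality $\deg(x+y)\le \max(\deg x,\deg y)$ fails for spans, e.g.\ $x=\mu$, $y=-1$), so the estimate must be run separately for the top-degree and the bottom-degree, each of which does extend to a valuation on $\K(\G)$ because $\Z[\G]$ is a domain.

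The genuine gap is in the step you treat as routine, namely the claim that in degree one $f$ is a square $2\genus(K)\times 2\genus(K)$ pencil and that consequently $\tau(K,\g)\doteq \det(A_--\mu A_+)\cdot (1-\mu)^{-1}$. At the chain level this requires CW structures in which $\Sigma$ and $N$ have the same number of cells in every dimension, i.e.\ it requires $N$ to be homotopy equivalent to a wedge of $2\genus(K)$ circles, hence $\pi_1(N)$ free, hence (since $N$ is irreducible, $\Sigma$ being incompressible) $N$ a handlebody, i.e.\ $\Sigma$ a \emph{free} Seifert surface. There are knots whose free genus strictly exceeds their genus, so for such knots no minimal genus $\Sigma$ admits your presentation: $C_*(N;\K(\G))$ necessarily has $2$-cells, and the torsion of the mapping cone is an alternating product of several determinants, not a single pencil. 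The classical identity $\Delta_K(t)=\det(tV-V^T)$ is a statement about $H_1$, not about chains, and transporting it to $\K(\G)$ is exactly what needs the machinery you skip: over the principal ideal domain $R=\K(\ker\phi_K)[\mu^{\pm 1};\sigma]$ one has $\tau(K,\g)\doteq\prod_i \op{ord}\left(H_i(X_K;R)\right)^{(-1)^{i+1}}$, and Mayer--Vietoris identifies $H_1(X_K;R)$ with the cokernel of a pencil between the free $R$-modules $H_1(\Sigma;\K(\ker\phi_K))\otimes R$ and $H_1(N;\K(\ker\phi_K))\otimes R$. Moreover, your correction factor $(1-\mu)^{-1}$ is an artifact of the abelian case: if $\ker(\phi_K\co\G\to\Z)\ne\{1\}$, then $\g'-1$ is a \emph{unit} of the skew field $\K(\ker\phi_K)$ for every nontrivial $\g'$ (Strebel's trick \cite{St74}, used throughout \cite{Co04,Har05,Fr07}); this forces $H_0(X_K;R)=0$, so no $(1-\mu)^{-1}$ term appears, and it simultaneously drops $\dim_{\K(\ker\phi_K)}H_1(\Sigma;\K(\ker\phi_K))$ to $2\genus(K)-1$, which is the actual source of the $-1$ in the higher-order bound. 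Your final inequality comes out numerically correct, but it is derived from an intermediate formula that is neither established nor correct in general; repairing this step turns your argument into the proofs of the cited references.
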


In general higher-order invariants are very difficult to calculate,
see e.g. \cite{Ho13} for some algorithms and a discussion of the inherent difficulties. Nonetheless, the examples given in \cite{Co04,Har05,Ho13} show that higher-order invariants give very powerful lower bounds on the knot genus. 

From a purely theoretical point of view the most interesting fact about higher-order Alexander torsions is the following theorem, due to Cochran \cite{Co04}, with extensions given in \cite{Har06,Fr07}, which says loosely speaking that `the bigger the quotient, the better the lower bound on the knot genus'. More precisely, the following theorem holds.

\begin{theorem}\label{thm:higherorderincrease}
Let $K\subset S^3$ be an oriented knot. Let  $\g\colon \pi_K\to \G$
be an epimorphism onto a torsion-free solvable group
 group and 
let $\delta\colon \G\to \Omega$ be another epimorphism  onto a torsion-free
solvable group such that $\g\circ \delta$ is admissible.  Then the following inequality holds:
\[ \deg( \tau(K,\g))\geq \deg( \tau(K,\g\circ \delta)).\]
\end{theorem}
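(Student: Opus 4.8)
The plan is to reduce the comparison of the two degrees to a single dimension inequality for Alexander modules over the coefficient skew fields, and then to establish that inequality by induction on the derived length of $\ker(\delta)$. Throughout I write the composite admissible epimorphism as $\delta\circ\g\colon\pi_K\to\Omega$ (this is what the statement denotes $\tau(K,\g\circ\delta)$), and I set $\G_0:=\ker(\phi_K\colon\G\to\Z)$ and $\Omega_0:=\ker(\phi_K\colon\Omega\to\Z)$; both are again torsion-free solvable and hence admit Ore localizations $\K(\G_0)$ and $\K(\Omega_0)$. Since $\delta\circ\g$ is admissible, $\phi_K$ factors through $\Omega$, and one checks that $\delta$ restricts to an epimorphism $\delta_0\colon\G_0\to\Omega_0$ whose kernel $S$ is solvable.

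First I would recast the degree as a $t$-degree. Writing a lift of the chosen meridian as $t$, the skew field $\K(\G)$ is the Ore localization of the skew-Laurent ring $\K(\G_0)\tpm$, and under the Dieudonn\'e determinant the $\phi_K$-degree of $\tau(K,\g)$ becomes the $t$-degree of the corresponding element of $\K(\G_0)(t)^\times_{ab}$. Because the complex is $\K(\G)$-acyclic, the modules $H_i(X_K;\K(\G_0)\tpm)$ are $t$-torsion, hence finite-dimensional over $\K(\G_0)$, and multiplicativity of torsion gives
\[ \deg(\tau(K,\g))=\sum_i(-1)^{i+1}\dim_{\K(\G_0)}H_i\bigl(X_K;\K(\G_0)\tpm\bigr).\]
Only $H_0$ and $H_1$ contribute, and $H_0$ has dimension $0$ or $1$ according to whether $\G_0$ is nontrivial, so up to an explicit bounded correction $\deg(\tau(K,\g))$ equals $\dim_{\K(\G_0)}H_1(X_K;\K(\G_0)\tpm)$, and similarly for $\Omega$. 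The point of this step is that both invariants are base changes of one and the same based free $\Z[\G]$-complex $D_*:=C_*(\wti{X_K})\otimes_{\Z[\pi_K]}\Z[\G]$: the $\G$-torsion is $D_*\otimes_{\Z[\G]}\K(\G)$ and the $\Omega$-torsion is $D_*\otimes_{\Z[\G]}\K(\Omega)$, the latter factoring through the ring map $\delta\colon\Z[\G]\to\Z[\Omega]$.

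The heart of the proof is then the inequality
\[ \dim_{\K(\G_0)}H_1\bigl(X_K;\K(\G_0)\tpm\bigr)\ \geq\ \dim_{\K(\Omega_0)}H_1\bigl(X_K;\K(\Omega_0)\tpm\bigr),\]
with matching correction terms. I would prove this by induction on the derived length of $S=\ker(\delta_0)$. Passing to the rational derived series of $S$ factors $\delta_0$ into a tower of epimorphisms of torsion-free solvable groups each having abelian kernel, so it suffices to treat a one-step extension with $A$ torsion-free abelian. There $\Z[\G_0]$ is a crossed product of $\Z[A]$ with $\Omega_0$; flatness of the Ore localizations $\K(\G_0)$ over $\Z[\G_0]$ and $\K(\Omega_0)$ over $\Z[\Omega_0]$ lets one compare the two homology modules through a Wang-type long exact sequence associated to $A$, and the extra homology contributed by $A$ adds dimension on the $\G_0$-side because $A$ is torsion-free and the module in question is the first homology of the knot exterior. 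It is exactly here that one uses that we are looking at an Alexander module rather than an arbitrary module.

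The main obstacle is this last inequality, and it is genuinely non-formal for two reasons. First, $\delta$ does not induce a ring homomorphism $\K(\G)\to\K(\Omega)$ --- a homomorphism of skew fields is injective, whereas $\delta$ has the large kernel $\ker(\delta)$ --- so the torsion cannot be pushed forward functorially, and one is forced to argue at the level of the common complex $D_*$. Second, the naive dimension inequality $\dim_{\K(\G_0)}(\K(\G_0)\otimes M)\ge \dim_{\K(\Omega_0)}(\K(\Omega_0)\otimes_{\Z[\G_0]}M)$ is false for general finitely generated $\Z[\G_0]$-modules $M$: for $g\in\ker(\delta_0)$ nontrivial the module $\Z[\G_0]/(1-g)$ has dimension $0$ over $\K(\G_0)$ but positive dimension over $\K(\Omega_0)$. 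The correct direction holds only because $H_1(X_K;-)$ is an Alexander module, equivalently because $\chi(X_K)=0$ forces the $H_0$-corrections on the two sides to balance. Controlling these corrections uniformly along the inductive tower, and verifying torsion-freeness of the intermediate quotients so that all Ore localizations exist, are the technical points requiring care. Finally, the edge case $\tau(K,\g)=0$ does not arise: for admissible epimorphisms onto torsion-free solvable groups the knot exterior is $\K$-acyclic, so both torsions are nonzero and the displayed degrees are finite.
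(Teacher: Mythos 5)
You should first be aware that the survey you are being compared against contains \emph{no} proof of Theorem \ref{thm:higherorderincrease}: the result is quoted from Cochran \cite{Co04}, with extensions in \cite{Har06} and \cite{Fr07}, so your proposal has to be measured against those arguments. Your first reduction is sound and is indeed how that literature proceeds: for admissible epimorphisms onto torsion-free solvable groups the complex is $\K(\G)$-acyclic, the modules $H_i(X_K;\K(\G_0)\tpm)$ are torsion, and multiplicativity of Reidemeister torsion gives
\[ \deg(\tau(K,\g))=\sum_i(-1)^{i+1}\dim_{\K(\G_0)}H_i\bigl(X_K;\K(\G_0)\tpm\bigr), \]
which is exactly the translation underlying \cite{Fr07}. (Your phrase ``with matching correction terms'' glosses over the one case where the $H_0$-corrections do not match, namely $\Omega=\Z$ and $\G\neq\Z$, but there the discrepancy points in the favorable direction, so this is minor.)

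The genuine gap is at what you yourself call the heart of the proof. The inequality for a one-step extension with torsion-free abelian kernel $A$ is asserted, not proved. First, there is no Wang exact sequence attached to a normal subgroup $A$ unless $A$ is infinite cyclic, and the abelian kernels arising in your tower are in general not even finitely generated as groups (already the kernel of the map from the rationalized metabelian quotient of $\pi_K$ to $\Z$ is the integral Alexander module modulo torsion, which is infinitely generated whenever $\Delta_K$ is not monic); so at best one has a Cartan--Leray spectral sequence, from which your conclusion does not follow formally. Second, the sentence ``the extra homology contributed by $A$ adds dimension on the $\G_0$-side'' \emph{is} the statement to be proven, and no mechanism for it is offered. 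What actually makes the inequality go the right way in \cite{Co04} and \cite{Har06} is a specific non-formal input that your sketch never identifies: Strebel's theorem \cite{St74} that the relevant (PTFA, resp.\ torsion-free solvable) groups lie in his class $D(R)$, i.e.\ a homomorphism of free $\Z[\G]$-modules which becomes injective after applying $-\otimes_{\Z[\G]}\Z[\Omega]$ was already injective; equivalently, for any matrix $A$ over $\Z[\G]$ one has $\rank_{\K(\Omega)}(\delta(A))\leq\rank_{\K(\G)}(A)$. Applied to the images, under $\g$ and $\delta\circ\g$, of one and the same presentation matrix coming from a spine of $X_K$ with $\chi=0$, this rank comparison is precisely what defeats the $\Z[\G_0]/(1-g)$ phenomenon you correctly point out, and it is the engine of Cochran's Theorem~5.4 and of Harvey's monotonicity theorem. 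It is telling that \cite{St74} sits in the survey's bibliography even though it is never cited in the body of the text. Without this ingredient, or a substitute for it, your induction on derived length has nothing to run on, so the proposal does not constitute a proof.
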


Recall  that if $\g\colon \pi_K\to \G$ is an admissible epimorphism, then by definition the abelianization
$\phi_K$ factors through $\g$. It thus follows from Theorem \ref{thm:higherorderincrease} that the degree of a higher-order Alexander torsion is always at least the degree of the ordinary Alexander torsion.

Finally, we turn to fibered knots. If  $K\subset S^3$ is a fibered  knot,
then for any admissible epimorphism  $\g\colon \pi_K\to \G$
onto a torsion-free elementary-amenable we have by \cite{Co04,Har05,Fr07}
the following equality:
\[ \deg( \tau(K,\g))=2\genus(K)-1.\]
This gives only very limited information on fiberedness. For example, if $K$ is a non-fibered knot with $\deg(\Delta_K(t))=2\genus(K)$, then it follows from Theorem
\ref{thm:higherorderincrease} that degrees of higher-order torsion can not detect that $K$ is not fibered. The problem with higher-order Alexander torsions 
is that there is no established notion of `monicness'. One way around this problem is to define `monicness' in a radically different way, namely as the vanishing of a non-commutative Novikov homology as introduced in \cite{Si87}.
This point of view will be discussed in more detail in \cite{Fr15}. 
We also refer to \cite{GS11} for another approach to using noncommutative invariants for detecting non-fibered knots.

\subsection{Questions and conjectures}

The conclusion of Theorem \ref{thm:higherorderincrease} seems to suggest that the degree of the higher-order Alexander torsion corresponding to an 
epimorphism   $\g\colon \pi_K\to \G$ onto a torsion-free elementary-amenable group is the optimal lower bound on the genus that one can obtain from twisted invariants where the twisting factors through $\g$. 
We therefore propose the following conjecture.

\begin{conjecture}
Let $K\subset S^3$ be an oriented knot. Let  $\g\colon \pi_K\to \G$
be an epimorphism onto a torsion-free elementary-amenable
 group and let $\delta\colon \G\to \gl(k,\F)$ be a representation over a commutative field. Then the following inequality holds:
\[ \deg( \tau(K,\g))\geq \frac{1}{k}\deg( \tau(K,\g\circ \delta)(t)).\]
\end{conjecture}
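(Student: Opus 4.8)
The plan is to turn the conjecture into a single algebraic inequality comparing the degree of a Dieudonn\'e determinant over $\K(\G)$ with the degree of an ordinary determinant over $\F(t)$, and then to analyse that inequality through the $\phi_K$-graded structure of $\K(\G)$.

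First I would fix a deficiency-one presentation $\langle x_1,\dots,x_{g+1}\mid r_1,\dots,r_g\rangle$ of $\pi_K$ and model both chain complexes by the associated $2$-complex, so that each takes the form
\[ 0\to R^{g}\xrightarrow{\partial_2} R^{g+1}\xrightarrow{\partial_1} R\to 0, \]
with $R=\K(\G)$ in the higher-order case and with the $(\g\circ\delta,\phi_K)$-twisted coefficients $R=\F(t)\otimes\F^k$ in the other. Let $B\in M_g(\Z[\G])$ be the image under $\g$ of the Fox Jacobian $(\partial r_i/\partial x_j)$ with the column of a meridian $x_{j_0}$ (so $\phi_K(x_{j_0})=1$) deleted, and let $\Psi\colon\Z[\G]\to M_k(\F\tpm)$, $g\mapsto\delta(g)\,t^{\phi_K(g)}$, be the evident ring homomorphism. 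The Milnor--Turaev torsion formula then gives
\[ \tau(K,\g)=\Det{D}{B}\cdot\Det{D}{\g(x_{j_0})-1}^{-1} \]
and
\[ \tau(K,\g\circ\delta)(t)=\det(\Psi(B))\cdot\det(\Psi(\g(x_{j_0})-1))^{-1}, \]
so that both torsions are built from the \emph{same} integral matrix $B$. Since $\g(x_{j_0})-1$ has $\phi_K$-degree $1$, while $\Psi(\g(x_{j_0})-1)=\delta(\g(x_{j_0}))\,t-I_k$ has determinant $\prod_i(\lambda_i t-1)$ of degree $k$ (the $\lambda_i$ being the eigenvalues of the invertible matrix $\delta(\g(x_{j_0}))$), the two denominators contribute exactly $1$ and $k$. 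After discarding the degenerate cases in which one torsion vanishes (these require a separate, and not entirely trivial, transfer-of-acyclicity argument), the conjecture reduces to the inequality
\[ \deg\det(\Psi(B))\;\le\;k\cdot\deg(\Det{D}{B}); \]
write $(\star)$ for this inequality.

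To attack $(\star)$ I would exploit that $\Psi$ is graded. Put $\G_0=\{g\in\G:\phi_K(g)=0\}$ and choose $\mu\in\G$ with $\phi_K(\mu)=1$; then $\K(\G)$ is the twisted Laurent skew field $\K(\G_0)(\mu;\alpha)$ and the paper's degree is the width of the $\mu$-support. Every $p\in\Z[\G]$ has a normal form $\sum_i c_i\mu^i$ with $c_i\in\Z[\G_0]$, and $\Psi(c_i\mu^i)=\delta(c_i)\,\delta(\mu)^i\,t^i$ (with $\delta$ extended linearly to $\Z[\G_0]$), so $\Psi$ carries the $\mu$-grading to the $t$-grading. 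For a single $p=\sum_{i=a}^{b}c_i\mu^i$ with $c_a,c_b\ne0$ the top $t$-coefficient of $\det(\Psi(p))$ is $\det(\delta(c_b)\delta(\mu)^b)$, which can vanish only when the leading coefficient $c_b$ becomes \emph{singular} under $\delta$; in all cases $\deg\det(\Psi(p))\le k(b-a)=k\deg(p)$. This exhibits the mechanism behind $(\star)$: passing from $\K(\G)$, where every nonzero coefficient is invertible, to $M_k(\F)$, where $\det\delta(c)$ may vanish, can only \emph{lower} top and bottom degrees, which is the correct direction. By the symmetry $t\mapsto t^{-1}$ (that is, $\phi_K\mapsto-\phi_K$) it suffices to prove the separate estimates $d^{+}(\det(\Psi(B)))\le k\,d^{+}(\Det{D}{B})$ and $d^{-}(\det(\Psi(B)))\ge k\,d^{-}(\Det{D}{B})$ for the top and bottom $\mu$-degrees.

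The hard part, and the reason this is still a conjecture, is upgrading the scalar computation to the matrix $B$. The natural route is to pass to the discrete valuation ring $\mathcal{O}=\{x\in\K(\G):d^{+}(x)\le0\}$ of the top-degree valuation $-d^{+}$ and to write $B=U_1\,\diag(\mu^{a_1},\dots,\mu^{a_g})\,U_2$ with $U_1,U_2\in\gl_g(\mathcal{O})$, so that $d^{+}(\Det{D}{B})=\sum_i a_i$ and the unit factors are invisible to $d^{+}$. The obstruction is that $U_1,U_2$ have entries in $\K(\G)$ and $\Psi$ does \emph{not} extend to $\K(\G)$: already for the trivial representation $\Psi$ factors through $\Z[\G]\to\zt$, which has a kernel, so $\Psi$ sends nonzero elements to singular matrices and cannot be applied to the reducing factors $U_i$. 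One therefore cannot transport the diagonalisation through $\Psi$, and must instead prove $(\star)$ intrinsically --- for instance by a Newton-polygon argument over $\K(\G_0)$ that simultaneously controls the leading $\mu$-coefficient matrices of $\Det{D}{B}$ and of $\det(\Psi(B))$ and shows that their degeneration under $\delta$ can only decrease the latter. I would first settle the special case of a representation $\delta$ that remains injective after Ore localisation, so that $\Psi$ \emph{does} extend to $\K(\G)\to M_k(\F(t))$ and the base-change argument yields $(\star)$ --- in fact equality --- at once; the general inequality should then follow by degenerating a generic such $\delta$ to the given one and invoking semicontinuity of the degree.
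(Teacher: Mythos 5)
First, a point of order: the statement you were asked to prove is one of the paper's \emph{conjectures} --- the authors state it without proof --- so there is no argument in the paper to compare yours against, and your proposal has to be judged as a stand-alone attempt. As such it is honest but incomplete, as you yourself concede: the entire content of the conjecture is the inequality you label $(\star)$, namely $\deg\det(\Psi(B))\le k\,\Det{D}{B}$-degree times $k$, and $(\star)$ is precisely what you do not prove. What precedes it is correct and well executed: modelling both torsions on the same Fox Jacobian $B$, the computation that the denominators $\g(x_{j_0})-1$ and $\Psi(\g(x_{j_0})-1)$ contribute degrees exactly $1$ and $k$, the scalar estimate $\deg\det(\Psi(p))\le k\deg(p)$ together with the correct mechanism (leading coefficients of $\Z[\G_0]$ can become singular under $\delta$, which only lowers degrees), and the diagnosis that a naive base-change argument dies because $\Psi$ does not extend to the Ore localization. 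The acyclicity bookkeeping you defer is real but peripheral: for admissible $\g$ the complex over $\K(\G)$ is known to be acyclic (this goes back to Strebel \cite{St74}, as used in \cite{Co04,Fr07}), so only the right-hand torsion can vanish, and that case is vacuous.

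The genuine gap is the exit strategy of your final paragraph, and it fails for a structural reason, not for lack of detail. You propose to first treat representations $\delta$ for which $\Psi$ extends to a homomorphism $\K(\G)\to M_k(\F(t))$, and then to reach the given $\delta$ by degeneration and semicontinuity of the degree. But such $\delta$ essentially never exist. If $\Psi$ extends, then, since $\K(\G)$ is a skew field, the extension has trivial kernel, i.e.\ $\K(\G)$ embeds into $M_k(\F(t))$. The ring $M_k(\F(t))$ satisfies the standard polynomial identity of degree $2k$ (Amitsur--Levitzki), hence so does every subring; thus $\K(\G)$, and with it $\Z[\G]$ and therefore $\Q[\G]$ (the standard identity is multilinear, so it survives passing to $\Q$-linear combinations), would be PI rings. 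By Passman's theorem, a group algebra over a field of characteristic zero is PI only if the group has an abelian subgroup of finite index. Consequently, for every torsion-free elementary-amenable $\G$ that is not virtually abelian --- which excludes essentially every quotient for which $\tau(K,\g)$ carries more than abelian information, e.g.\ the quotients by the rational derived series used by Cochran and Harvey --- the ``good locus'' of representations you want to degenerate from is \emph{empty}, and the semicontinuity step has nothing to start from. (Even in the virtually abelian cases where good $\delta$ do exist, you would additionally need the given $\delta$ to lie in the Zariski closure of the good locus inside the representation variety of $\G$, a component-structure question you do not address.) So your proposal should be read as a correct reduction of the conjecture to $(\star)$, plus a correct identification of the obstruction, followed by a repair plan that cannot work; the conjecture remains exactly as open after your argument as it is in the paper.
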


We next turn to the question on whether higher-order Alexander torsions can detect the genus of a knot. 
The invariants of Cochran \cite{Co04,Har05} were initially defined for epimorphisms onto PTFA groups, which are special classes of  torsion-free solvable groups. 
It is straightforward to see that if $K$  is a knot with trivial Alexander polynomial, then the abelianization is the only  epimorphism onto a non-trivial torsion-free solvable group. 
It follows that all the higher-order invariants of $K$  corresponding  to epimorphisms onto torsion-free solvable groups are equal to $(1-t)^{-1}$,
in particular they can not determine the genus of $K$.

There is hope though if we consider
higher-order invariants corresponding to epimorphisms onto torsion-free elementary-amenable groups. In fact we propose the following conjecture.

\begin{conjecture}
For any knot $K$ there exists an epimorphism $\g\colon \pi_K\to\G$
onto a  torsion-free elementary-amenable group, such that 
\[ \deg(\tau(K,\g))=2\genus(K)-1.\]
\end{conjecture}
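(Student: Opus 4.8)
The plan is to separate the two inequalities. The bound $\deg(\tau(K,\g))\le 2\genus(K)-1$ holds for \emph{every} admissible epimorphism onto a torsion-free elementary-amenable group by Theorem \ref{thm:higherordergenus}, so the whole content of the conjecture is to exhibit a single $\g$ realizing equality; equivalently, to produce an admissible $\g\colon\pi_K\twoheadrightarrow\G$ with $\deg(\tau(K,\g))\ge 2\genus(K)-1$. The fibered case requires nothing new: if $K$ is fibered then $\G=\Z$ and $\g=\phi_K$ already suffice, since $\deg(\tau(K,\phi_K))=\deg(\tau(K)(t))=2\genus(K)-1$ by Equation~(\ref{equ:taufib}). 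I would therefore assume from now on that $K$ is non-fibered.

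For the lower bound the natural input is the genus-detecting mechanism behind Theorem \ref{thm:twialexdetectsgenus}, which rests on the virtual fibering theorem of Agol \cite{Ag08} and the work of Wise \cite{Wi09,Wi12a,Wi12b} and Przytycki--Wise \cite{PW12}: there is a finite regular cover $\wti{X}\to X_K$, i.e.\ a finite-index normal subgroup $N\lhd\pi_K$ with finite quotient $G=\pi_K/N$, such that $\wti{X}$ fibers and a finite-dimensional representation of $\pi_K$ through $G$ realizes the genus. The goal is then to \emph{convert} this finite quotient into a torsion-free one. One route is to build $\G$ directly from the monodromy of the cover, using the $G$-action on the torsion-free homology of the fiber to form an elementary-amenable semidirect product, and to arrange that $\deg(\tau(K,\g))$ computes the fiber's Thurston norm and, via the behaviour of the Thurston norm under finite covers (Gabai), returns $2\genus(K)-1$ after normalizing by the covering degree. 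A second route, if the preceding comparison Conjecture relating $\deg(\tau(K,\g))$ to $\tfrac1k\deg(\tau(K,\g\circ\delta)(t))$ were available, is to take $\delta$ to be the genus-detecting representation of Theorem \ref{thm:twialexdetectsgenus} and thereby reduce to factoring that representation through a torsion-free elementary-amenable group.

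The step I expect to be the main obstacle is structural and common to both routes. Because $\pi_K$ is normally generated by a meridian, every quotient $\G$ is normally generated by the image of the meridian, so $\G^{ab}$ is cyclic; admissibility forces a surjection $\G\twoheadrightarrow\Z$, whence $\G^{ab}\cong\Z$. Thus a torsion-free elementary-amenable admissible quotient can never be finite, which is precisely why the finite-quotient representations underlying Theorem \ref{thm:twialexdetectsgenus} do not transport directly to this setting. Moreover, when $K$ has trivial Alexander polynomial the Alexander-module computation shows that every torsion-free \emph{solvable} admissible quotient already collapses to $\Z$, so no solvable group can detect a positive genus. The group we are after must therefore be a torsion-free elementary-amenable but \emph{non-solvable} group with $\G^{ab}=\Z$, and no general method is known for producing such quotients of knot groups.

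Consequently the crux of any proof is to manufacture such a $\G$ as a quotient of $\pi_K$ and to compute its higher-order torsion. The Ore localization $\K(\G)$ exists for all torsion-free elementary-amenable $\G$ by \cite{DLMSY03,KrLM88}, so the torsion $\tau(K,\g)$ and its degree are at least well defined; where available one can use the monotonicity of Theorem \ref{thm:higherorderincrease} to compare degrees along any solvable subtower. The genuinely new work is to show that the degree for the non-solvable $\G$ \emph{attains}, rather than merely approaches from below, the value $2\genus(K)-1$ imposed by the upper bound. Controlling the torsion of this non-solvable group, and in particular ruling out a strict drop, is where I expect essentially all of the difficulty to lie.
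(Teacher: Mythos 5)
There is a fundamental mismatch here: the statement you were asked to prove is not a theorem of the paper at all — it is stated there as an open conjecture, with no proof offered — and your proposal does not close the gap either, as you yourself concede in your last paragraph. The parts of your analysis that are solid do match the paper's own discussion: the upper bound $\deg(\tau(K,\g))\leq 2\genus(K)-1$ is Theorem \ref{thm:higherordergenus}; the fibered case is handled by the equality $\deg(\tau(K,\g))=2\genus(K)-1$, which the paper records for every admissible epimorphism when $K$ is fibered (generalizing Equation~(\ref{equ:taufib})); the observation that admissibility forces $\G^{ab}\cong\Z$, so that no admissible torsion-free quotient is finite, is correct; and the observation that for knots with trivial Alexander polynomial every torsion-free \emph{solvable} admissible quotient collapses to $\Z$ is exactly the paper's motivation for passing to elementary-amenable groups.

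But neither of your two routes constitutes a proof, and the gaps are not technical details. Your second route invokes the comparison inequality $\deg(\tau(K,\g))\geq \tfrac{1}{k}\deg(\tau(K,\g\circ\delta)(t))$, which is itself only a conjecture in the paper, so you would be deducing one open conjecture from another. Your first route — converting the finite special cover supplied by Agol \cite{Ag08}, Wise \cite{Wi09,Wi12a,Wi12b} and Przytycki--Wise \cite{PW12} into a torsion-free elementary-amenable admissible quotient — founders on precisely the point the paper flags: it is not even known that knot groups are residually torsion-free elementary-amenable (Conjecture \ref{qu:restfea}); what is known is only the \emph{virtual} statement, and a finite-index subgroup being residually torsion-free nilpotent does not hand you an admissible quotient of $\pi_K$ itself. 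Moreover, even granting such quotients, the monotonicity tool you would need to rule out a degree drop, Theorem \ref{thm:higherorderincrease}, is stated only for solvable groups, while your target $\G$ must be non-solvable. The closest actually proved statement, cited in the paper from \cite{DubFL14a}, lives in the $L^2$ setting: there exists an epimorphism onto a \emph{virtually abelian} group whose $L^2$-Alexander torsion has degree $2\genus(K)-1$; virtually abelian groups have torsion, so this does not resolve the conjecture in question. In short, your write-up is a reasonable survey of why the conjecture is hard, but it is not a proof, and the paper contains no proof for you to have matched.
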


In order for this conjecture to have a chance to be true we need that 
knot groups have `enough' epimorphisms onto  torsion-free elementary-amenable group. Before we state our next conjecture we recall that if $\PP$ is a property of groups, then a group $\pi$ is called \emph{residually $\PP$} if given any $g\in \pi$ there exists an epimorphism $\g\colon \pi\to \G$ such that $\g(g)$ is non-trivial and such that $\G$ has Property $\PP$. 

We propose the following conjecture, which also appears in \cite{AsFW13} as a question.

\begin{conjecture}\label{qu:restfea}
Given any knot $K$ the group $\pi_K$ is residually torsion-free elementary-amenable.
\end{conjecture}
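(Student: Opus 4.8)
Since this is a conjecture rather than a proved statement, I describe the strategy I would attempt and, crucially, the point at which it stalls. First I would unwind the definition of residual torsion-free elementary-amenability: it suffices to show that for every nontrivial $g\in\pi_K$ there is an epimorphism $\g\colon\pi_K\to\G$ onto a torsion-free elementary-amenable group with $\g(g)\ne 1$. The elements $g$ with $\phi_K(g)\ne 0$ are handled at once by the abelianization $\phi_K\colon\pi_K\to\Z$, since $\Z$ is torsion-free abelian, hence elementary-amenable. The entire difficulty therefore concentrates on $\ker(\phi_K)=\pi_K'$, the fundamental group of the infinite cyclic cover of $X_K$: I must separate every nontrivial $g\in\pi_K'$ from the identity by a torsion-free elementary-amenable quotient of $\pi_K$.

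The natural first tool is the tower of torsion-free solvable quotients coming from Harvey's torsion-free derived series of $\pi_K$. Each quotient $\pi_K/\pi_K^{[n]}$, where $\pi_K^{[n]}$ denotes the $n$-th term of that series, is poly-torsion-free-abelian, hence torsion-free solvable and in particular elementary-amenable, and for knots with rich higher Alexander modules these quotients already detect many commutator elements. I would therefore try to prove that $\bigcap_n\pi_K^{[n]}=\{1\}$, which would yield residual-(torsion-free solvable) and hence the conjecture. The main obstacle is exactly the one the paper flags for the solvable invariants: this fails. When $\Delta_K(t)=1$ the torsion-free derived series of $\pi_K$ collapses after the first term, so the only epimorphism onto a nontrivial torsion-free solvable group is the abelianization, and \emph{no} nontrivial element of $\pi_K'$ is detected. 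Thus for such knots one is forced out of the solvable world and must produce genuinely \emph{non}-solvable torsion-free elementary-amenable quotients.

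This is the crux, and where I expect the real work to lie. Here I would try to exploit the two strong structural inputs available for knot groups. On the one hand $X_K$ is aspherical and $\pi_K$ is residually finite---indeed LERF, RFRS and virtually special by Agol and Wise---so $\pi_K'$ has an abundance of finite quotients separating any prescribed $g$. On the other hand, torsion-free elementary-amenable groups of unbounded derived length (suitable iterated or wreath-type extensions of copies of $\Z$) remain amenable and torsion-free while escaping solvability. The plan would be, for a fixed $g\in\pi_K'$, to start from a finite quotient detecting $g$ and then to realize a compatible family of such data as a single torsion-free elementary-amenable quotient assembled by a transfinite sequence of extensions of torsion-free abelian groups---in effect trading the torsion present in residually-finite quotients for extra length in an elementary-amenable tower.

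The decisive and currently open difficulty is precisely this last step. There is no known general mechanism for converting the residually-finite (or RFRS) structure of a knot group into torsion-free elementary-amenable quotients, and for knots with $\Delta_K(t)=1$ not a single non-solvable torsion-free elementary-amenable quotient of $\pi_K$ is known to exist. In my view, establishing the existence of such quotients---and doing so functorially enough to separate arbitrary elements of $\pi_K'$---is the heart of the conjecture, and I would not expect the derived-series machinery alone to reach it.
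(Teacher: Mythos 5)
The statement you were asked about is a conjecture; the paper offers no proof of it, only the remark that work of Przytycki--Wise and Wise implies every knot group is virtually residually torsion-free nilpotent. Your write-up is therefore correct and matches the paper's own discussion: you identify the same obstruction the paper flags (when $\Delta_K(t)=1$ the abelianization is the only epimorphism onto a nontrivial torsion-free solvable group, so one is forced to construct genuinely non-solvable torsion-free elementary-amenable quotients), you invoke essentially the same evidence from residual finiteness and the Agol--Wise--Przytycki results, and you rightly stop short of claiming a proof.
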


Some evidence for a positive answer to this conjecture is provided
by the work of Przytycki-Wise \cite{PW12} and Wise \cite{Wi09,Wi12a,Wi12b}
which implies that any knot group is virtually residually torsion-free nilpotent, i.e., 
any knot group admits a finite index normal subgroup which is residually torsion-free nilpotent. (We refer to \cite{AsFW13} for precise references.)

\section{The $L^2$-Alexander torsion}\label{section:l2torsion}

In this last section we will discuss $L^2$-Alexander torsions.
The original definition of  $L^2$-Alexander {invariants} was given by Li-Zhang \cite{LiZ06a,LiZ06b,LiZ08}  and so far it has been studied only in a few papers \cite{Du11,DubW10,DubW13,BA13a,BA13b}. 
Since this invariant  is a relative newcomer and still largely unknown we will discuss this invariant in more detail than the previous two twisted invariants.

\subsection{Definition of the $L^2$-torsion of a chain complex}

Before we start with the definition of the $L^2$-Alexander torsion we need to recall some key properties of the Fuglede-Kadison determinant and the definition of the $L^2$-torsion of a chain complex of $\R[\G]$-modules. Throughout the section we refer to \cite{Lu02} and to \cite{DubFL14a} for details and proofs.

Throughout this section let $\G$ be a group.
Let $A$ be a   matrix over $\R[\G]$.
Then there exists the notion of  $A$ being of `determinant class'. 
(To be slightly more precise, we view the $k\times l$-matrix $A$ as a homomorphism $\NN(\G)^l\to \NN(\G)^k$, where $\NN(\G)$ is the von Neumann algebra of $\G$, and then there is the notion of being of `determinant class'.)
We treat this entirely as a black box,
but we note that if $\G$ is residually amenable, e.g. if $\G$ is a 3-manifold group \cite{Hem87} or if $\G$ is solvable, then 
by \cite{Lu94,Sc01,Cl99,EleS05} any matrix over $\Q[\G]$ is of determinant class. 
It is in fact possible that all matrices which appear in our setup are of determinant class.

Let $A$ be a matrix over $\R[\G]$. 
(Note that we do not assume that $A$ is a square matrix.)
If $A$ is not of determinant class then for the purpose of this paper we define $\det_\G(A)=0$. On the other hand, if $A$ is of determinant class, then we define
\[ \det_\G(A):=\mbox{Fuglede-Kadison determinant of $A$}.\]

We will not provide a formal definition of the Fuglede-Kadison determinant, the non specialist reader can think of it as a continuous generalization of the usual determinant of an operator with a finite spectrum to an operator with an infinite spectrum.
Furthermore, in this paper we will not provide any proofs, but 
to give the reader a flavor of the Fuglede-Kadison determinant
we now list some properties.
\bn[label=(\alph*)]
\item If $A$ is of determinant class, then $\det_\G(A)>0$.
\item If $\G=\{e\}$ is the trivial group and $A$ is a square matrix over $\R=\R[\{e\}]$ such that the ordinary determinant $\det(A)\in \R$ is non-zero, then
it follows from \cite[Example~3.12]{Lu02} that  $\det_{\{e\}}(A)=|\det(A)|$.
\item If $A$ is a matrix over $\R[\G]$ and if $\G$ is a subgroup of a group $G$,
then $\det_\G(A)=\det_G(A)$. 
\item The Fuglede-Kadison determinant stays unchanged under the following operations.
\bn
\item[(i)] Adding a column of zeros or a row of zeros to a matrix.
\item[(ii)] Swapping two columns or rows.
\item[(iii)] Right multiplication of a column by some $\pm g$ with $g\in G$.
\item[(iv)] Left multiplication of a row by some $\pm g$ with $g\in G$.
\en
\item If $\G=\ll x\rr$ is an infinite cyclic group and if $A$ is a square matrix  over $\R[\G]=\R[x^{\pm 1}]$ such that the ordinary determinant 
$\det(A)\in \R[x^{\pm 1}]$ is non-zero, then it follows from \cite[Example~3.22]{Lu02} and \cite[Section~1.2]{Rai12} that
\be \label{equ:dettwomahler} \det_{\ll x\rr}(A)=m(\det(A))\ee
where $\det(A)\in \R[x^{\pm 1}]$ is the usual determinant of $A$ over the ring $\R[x^{\pm 1}]$, and where given a non-zero polynomial $p(x)\in \R[x^{\pm 1}]$ we denote by $m(p(x))$ its Mahler measure. Recall, that if $p(x)=c_nx^n+c_{n-1}x^{n-1}+\dots+c_1x+c_0$ (with $c_n\ne 0$ and $c_0\ne 0$) and if  $a_1,\dots,a_n$ are the roots of $p(x)$, then it follows from Jensen's formula that
\be\label{equ:jensen} m(p(x)) = |c_n| \cdot \prod\limits_{j=1}^n \mbox{max}(|a_j|,1).\ee
We refer to \cite{SiW04} for more details and references.
\item For any two square matrices $A$ and $B$ of determinant class of the same size which have `full rank'  we have $\det_\G(AB)=\det_\G(A)\cdot \det_\G(B)$. 
Here, we say that a square $k\times k$-matrix over $\R[\G]$ has `full rank' if the $L^2$-Betti number of the kernel of the corresponding automorphism of $\R[\G]^k$ is zero.
\en

After this quick low-carb introduction to the Fuglede-Kadison determinant we now turn to $L^2$-torsions of chain complexes over group rings.
In the following  let
\[ C_*=\left( \,\,0\to \R[\G]^{n_k}\xrightarrow{A_{k}} \R[\G]^{n_k}\xrightarrow{A_{k-1}}
\dots  \R[\G]^{n_1}\xrightarrow{A_{1}} \R[\G]^{n_0}\to 0\right)\]
be a chain complex of free  based left-$\R[\G]$-modules. (In particular we view the elements of $\R[G]^{n_i}$ as row vectors on which the matrices $A_i$ acts by right multiplication.) We can then consider the corresponding $L^2$-Betti numbers $b_i^{(2)}(C_*)\in \R_{\geq 0}$.
 If any  of these $L^2$-Betti number is non-zero or if any of the boundary maps is not of determinant class, then we define the 
 $L^2$-torsion $\tautwo(C_*):=0$. Otherwise we define the $L^2$-torsion of $C_*$ to be 
\[ \tautwo(C_*):=\prod_{i=1}^k \det_\G(A_i)^{(-1)^i}\in \R_{>0}.\]

\subsection{The  $L^2$-torsion of a knot}

Let $K\subset S^3$ be an oriented knot and let $\g\colon \pi_K\to \G$  be an
epimorphism onto a group.
 We  consider the chain complex
\[ C_*^{\g}(X_K;\R[\G]):=\R[\G]\otimes_{\Z[\pi_K]} C_*(\wti{X_K}) \]
where $g\in \pi_K$ acts on $C_*(\wti{X_K})$ using the deck transformation action and where $g\in \pi_K$ acts on $\R[\G]$ by right-multiplication by
$\g(g)$.

\begin{remark}
The astute reader will have noticed that in contrast to the previous twisted invariants we now view $C_*(\wti{X_K})$ as a left $\Z[\pi_K]$-module. In fact it makes basically no difference whether we view $C_*(\wti{X_K})$ as a left $\Z[\pi_K]$-module or a right $\Z[\pi_K]$-module. But in most papers on twisted and higher-order Alexander torsions the later convention is used, whereas in all papers on $L^2$-Alexander torsions the former convention is used.
\end{remark}

We once again pick a  lift of each cell in $X_K$ to $\wti{X_K}$. Note that the chain complex $ C_*^{\g}(X_K;\R[\G])$ is  a chain complex of free left-$\R[\G]$-complexes where the lifts of the cells give naturally rise to a basis. We can therefore define
\[ \tautwo(K,\g):=\tautwo\left(\R[\G]\otimes_{\Z[\pi_K]}C_*(\wti{X_K}) \right)\in \R_{\geq 0}.\]
Note that $\tautwo(K,\g)\in \R_{\geq 0}$ is well-defined with no indeterminacy. 
We refer to  $\tautwo(K,\g)$ as the \emph{$L^2$-torsion of $(K,\g)$}. 
If $\g$ is the identity, then we write $\tautwo(K)=\tautwo(K,\g)$, and we refer to is as the \emph{full $L^2$-torsion of $K$}. 

As we mentioned in the introduction, L\"uck--Schick \cite{LS99} showed that  the full $L^2$-torsion of $K$ is in fact a repackaging of the volume of a knot, namely they showed that 
\be \label{equ:tauvol} \tautwo(K)=\exp\left(\frac{1}{6\pi}\vol(K)\right).\ee

\subsection{The  $L^2$-Alexander torsion of a knot}

In the last section we stated  that the volume of a knot can be recovered as an $L^2$-invariant of its exterior. Even though this is a very pretty fact, this result is also a little disappointing in so far as it shows that the full $L^2$-torsion does not give us any new information on a knot. 
The problem with $L^2$-torsions is furthermore that they are `just' numbers, they therefore have little structure.

Following Li--Zhang \cite{LiZ06a,LiZ06b,LiZ08} we will address these issues by twisting the representations using  characters.
More precisely, let $K\subset S^3$ be an oriented knot and 
let $\g\colon \pi_K\to \G$  be an 
admissible homomorphism, i.e., $\g$ is an homomorphism
such that the abelianization $\phi_K$ factors through $\g$.
 Furthermore let $t\in \R_{>0}$. 
We then consider the  {representation}
\[ \ba{rcl} \g_t\colon \pi_K&\to & \aut(\R[\G]) \\
g&\mapsto & (f\mapsto t^{\phi(g)}\g(g)\cdot f)\ea \]
and we  consider the chain complex
\[ C_*^{\g_t}(X_K;\R[\G]):= \R[\G]\otimes_{\Z[\pi_K]}C_*(\wti{X_K})\]
where $g\in \pi_K$ acts on $\R[\G]$ on the right via the representation $\g_t$.
We pick a  lift of each cell in $X_K$ to $\wti{X_K}$. 
For each $t\in \R_{>0}$ the above is now a chain complex of free left $\R[\G]$-modules where the lifts of the cells give naturally rise to a basis. We can therefore define
\[ \tautwo(K,\g)(t):=\tautwo\left(C_*^{\g_t}(X_K;\R[\G])\right)\in \R_{\geq 0}.\]
Put differently, we just  defined a function 
\[ \ba{rcl} \tautwo(K,\g)\colon \R_{>0}&\to& \R_{\geq 0}\\
t&\mapsto &\tautwo(K,\g)(t)\ea\]
which we will refer to as the \emph{$L^2$-Alexander torsion of $(K,\g)$}.
This function is well-defined up to multiplication by a function of the form $t\mapsto t^k$, for some $k\in \Z$, see e.g.~\cite{LiZ06a, LiZ06b, DubW13}.

We now say that two functions $f,g:\R_{>0}\to [0,\infty)$ are \emph{equivalent},
written as $f\doteq g$,  if there exists a $k\in \Z$, such that
\[ f(t)= t^k g(t) \mbox{ for all }t\in \R_{>0}.\]
By the above, the equivalence class of $\tautwo(K,\g)(t)$ is a well--defined invariant of $(K,\g)$.  

One of the reasons why $L^2$-invariants are so interesting is the fact that any group admits a canonical epimorphism onto a group, namely the identity map.
In the following we refer to the corresponding  $L^2$-Alexander torsion as the 
\emph{full}  $L^2$-Alexander torsion of $K$.
More precisely, the full $L^2$-Alexander torsion of $K$ is defined as 
\[ \tautwo(K)(t):=\tautwo(K,\id_{\pi_K})(t).\]
Note that in contrast to the twisted Reidemeister torsion and the higher-order Alexander torsion which always depend on the choice of a representation, the full $L^2$-Alexander torsion
 is finally again a canonical invariant. Moreover, the invariant  looks promising as it `sees the whole group'. 

The full $L^2$-Alexander torsion of $K$ is a slight variation
on the $L^2$-Alexander invariant $\Delta^{(2)}_K(t)\colon \R_{>0}\to \R_{\geq 0}$ which was first introduced
by Li--Zhang \cite{LiZ06a,LiZ06b,LiZ08}. In fact in \cite{DubFL14a} we show that 
\[ \Delta^{(2)}_K(t)\doteq \tautwo(K)(t)\cdot \max\{1,t\}.\]

\subsection{Properties}

We have seen that the Alexander polynomial and its generalizations are symmetric.
In \cite{DubFL14b} we 
will show that if $K\subset S^3$ is an oriented knot and $\g\colon \pi_K\to \G$ an admissible epimorphism onto a group, then $\tautwo(K,\g)(t)$ is also symmetric,  in the sense that for any 
representative $\tautwo(K,\g)(t)$ of the $L^2$-Alexander torsion of $(K,\g)$ we have
\be \label{equ:l2sym} \tautwo(K,\g)(t^{-1}) = t^{n}\cdot \tautwo(K,\g)(t)\ee
for some odd $n$.

It is obvious from the definitions that  the full $L^2$-Alexander torsion 
of a knot $K$ satisfies $\tautwo(K)(1)=\tautwo(K)$. Combining this with 
Equality (\ref{equ:tauvol}) we obtain that
\be \label{equ:tauvol2} \tautwo(K)(1)=\exp\left(\frac{1}{6\pi}\vol(K)\right).\ee
It follows in particular that $\tautwo(K)(1)$ is non-zero. 
The full $L^2$-Alexander torsion thus can be viewed as a deformation of the volume of a knot.

The  $L^2$-Alexander torsion has been determined only for a small number of knots.
First of all, a straightforward calculation, using 
Equations (\ref{equ:dettwomahler}) and (\ref{equ:jensen})
 shows that 
\[ \tautwo(\mbox{unknot})(t)\doteq \max\{1,t\}^{-1}.\] 
Dubois--Wegner \cite{DubW10,DubW13} used Fox differential calculus to  generalize this result and to show implicitly that if
 $T_{p,q}$ denotes the $(p,q)$-torus knot, where $p, q$ are positive coprime integers, then for any admissible epimorphism 
 $\g\co \pi_K\to \G$ we have
\be \label{equ:dw} \tautwo(T_{p,q},\g)(t)\doteq \max\{1,t^{(p-1)(q-1)-1}\}= \max\{1,t^{2\genus(K)-1}\}.\ee
In \cite{BA13a, BA13b}  Ben Aribi  also used Fox differential calculus to prove
 a  formula for the  $L^2$-Alexander torsion
for cables of  knots. Furthermore, Ben Aribi \cite[Theorem~1.2]{BA13b}  shows that the full $L^2$-Alexander torsion behaves well under the connected sum operation. More precisely, he showed that for any two oriented knots $J$ and $K$  the following equality holds
\be \label{equ:sum} \tautwo(J\# K)(t)\doteq \tautwo(J)(t)\cdot \tautwo(K)(t)\ee
which can be viewed as an analogue of Equation~(\ref{equ:deltasum}).
These results will be generalized in \cite{DubFL14a} to the study of $L^2$-Alexander torsions of graph manifolds and 3-manifolds with non-trivial JSJ decomposition.

The two results of Ben Aribi, together with Equation (\ref{equ:tauvol}) and 
\cite[Corollary~4.2]{Go83} imply  that the full $L^2$-torsion of a non-trivial knot is not equivalent to 
$\max\{1,t\}^{-1}$. Put differently, we see that the work of L\"uck--Schick and Ben Aribi implies that the full $L^2$-Alexander torsion detects the unknot:

\begin{theorem}
A knot $K\subset S^3$ is the trivial knot if and only if $\tautwo(K)(t)\doteq \max\{1,t\}^{-1}$. 
\end{theorem}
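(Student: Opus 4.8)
The plan is to prove the substantive ``if'' direction by contraposition: assuming $K$ is non-trivial, I will show $\tautwo(K)(t)\not\doteq\max\{1,t\}^{-1}$. The ``only if'' direction is immediate, since the cited computation via the Mahler-measure formulas~(\ref{equ:dettwomahler}) and~(\ref{equ:jensen}) gives $\tautwo(\mbox{unknot})(t)\doteq\max\{1,t\}^{-1}$. The key structural observation is that the argument splits according to the Gromov volume $\vol(K)$, and that these two regimes are distinguished by genuinely different features of the function $\tautwo(K)$.

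First I would exploit evaluation at $t=1$. Since the equivalence relation $\doteq$ only allows multiplication by a monomial $t^{k}$, which equals $1$ at $t=1$, the number $\tautwo(K)(1)$ is insensitive to the indeterminacy. Hence if $\tautwo(K)(t)\doteq\max\{1,t\}^{-1}$, then necessarily $\tautwo(K)(1)=\max\{1,1\}^{-1}=1$, and by~(\ref{equ:tauvol2}) this forces $\vol(K)=0$. Consequently every knot whose JSJ decomposition contains a hyperbolic piece is immediately distinguished from the unknot, because then $\tautwo(K)(1)=\exp\bigl(\tfrac{1}{6\pi}\vol(K)\bigr)>1$.

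It remains to treat the non-trivial knots with $\vol(K)=0$, i.e.\ the graph knots, where the value at $t=1$ carries no information and the whole function must be controlled. Here I would invoke the structural classification of such knots (Gordon~\cite{Go83}): a non-trivial graph knot is obtained from the unknot by a finite sequence of cabling and connected-sum operations, with at least one non-trivial step. I would then run an induction over this construction, feeding in Ben Aribi's connected-sum formula~(\ref{equ:sum}) and his cable formula~\cite{BA13a,BA13b}, with base case the unknot computation. The bookkeeping device is the asymptotic degree $d(K)$, the exponent of $\tautwo(K)(t)$ as $t\to\infty$ minus its exponent as $t\to 0$; this quantity is $\doteq$-invariant. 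The unknot has $d=-1$, while the Dubois--Wegner formula~(\ref{equ:dw}) gives $d(T_{p,q})=(p-1)(q-1)-1=2\genus(K)-1\geq 1$ for every non-trivial torus knot. I would check that connected sum and cabling are compatible with the additivity of the genus, so that $d(K)=2\genus(K)-1$ persists across the whole class of graph knots; since a knot is non-trivial if and only if $\genus(K)\geq 1$, each non-trivial operation pushes $d$ up to, and thereafter keeps it at, a value $\geq 1\neq -1$, whence $\tautwo(K)(t)\not\doteq\max\{1,t\}^{-1}$.

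The main obstacle is precisely this $\vol(K)=0$ case. For hyperbolic knots the genus-detection statement is only conjectural (Conjecture~\ref{conj:l2}), so one cannot hope to compute $d$ there; the proof works only because the volume packaged in~(\ref{equ:tauvol2}) replaces the degree as the distinguishing feature when $\vol>0$, while the explicit Dubois--Wegner and Ben Aribi formulas replace the volume when $\vol=0$. The delicate point inside the induction is the normalization of the connected-sum and cable formulas: one must track the factors of $\max\{1,t\}$ carefully so that, for instance, the unknot reproduces itself under a trivial connected sum and genus-additivity of $d$ comes out correctly. Verifying the symmetry~(\ref{equ:l2sym}) at each stage is a convenient consistency check that the normalizations have been handled properly.
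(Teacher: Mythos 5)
Your proposal is correct and follows essentially the same route as the paper: the paper likewise splits into the case $\vol(K)>0$, handled by the L\"uck--Schick formula via evaluation at $t=1$ (which is $\doteq$-invariant), and the case $\vol(K)=0$, handled by Gordon's classification \cite[Corollary~4.2]{Go83} of zero-volume knots together with Ben Aribi's connected-sum and cabling formulas, with the unknot computation giving the easy direction. Your degree bookkeeping in the graph-knot case simply spells out what the paper delegates to Ben Aribi's results \cite{BA13a,BA13b}.
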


\subsection{The $L^2$-Alexander torsion and fibered knots}
Let $K$ be a fibered knot.
We denote by 
$f\co \S\to \S$ the corresponding monodromy of the fiber surface $\S$.
We can associate to $f$ its entropy $h(f)\in \R_{\geq 0}$, as defined say in  
\cite[p.~185]{FLP79}. 
Since a fibered knot admits a unique fibration we can define $h(K):=h(f)$.

Note that if $K$ is an iterated torus knot, then $K$ is fibered with $h(K)=0$.
On the other hand, if $K$ is a hyperbolic fibered knot, then the monodromy
 is pseudo-Anosov, and by \cite[p.~195]{FLP79} the entropy equals the logarithm of the dilatation of $f$.

The following theorem is proved in \cite{DubFL14a}. It gives a partial computation of the $L^2$-Alexander torsion for fibered knots.

\begin{theorem} \label{thm:tautwofibered}
Suppose $K$ is a fibered knot and $\g\co \pi_K\to \G$ is an admissible epimorphism onto a group that is residually amenable (e.g. $\Gamma$ is solvable or a 3-manifold group). Then there exists a  representative $\tautwo(K,\g)(t)$ of the $L^2$-Alexander torsion of $(K,\g)$ such that for $T:=\exp(h(K))$ we have
\[ \tautwo (K,\g)(t)=\left\{\ba{ll} 1, &\mbox{ if }t<\frac{1}{T}\\ t^{2\genus(K)-1}, &\mbox{ if }t>T.\ea \right.\]
\end{theorem}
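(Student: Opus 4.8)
The plan is to exploit the fibered structure directly. Writing $X_K$ as the mapping torus of the monodromy $f\co\Sigma\to\Sigma$, where $\Sigma$ is a once-punctured genus-$\genus(K)$ surface (so $\Sigma\simeq\bigvee_{2\genus(K)}S^1$), I would fix the standard CW-structure with one $0$-cell, the $2\genus(K)$ horizontal $1$-cells of the fiber, one vertical $1$-cell $s$ in the flow direction, and $2\genus(K)$ vertical $2$-cells. With $\mu\in\pi_K$ the stable letter ($\phi_K(\mu)=1$), the twisted complex $C_*^{\gamma_t}(X_K;\R[\G])$ is the algebraic mapping torus (mapping cone) of the chain self-map $\psi_t=\id-t\cdot\wti f_\#$ of the fiber complex $C_*(\Sigma;\R[\G])$, where $\wti f_\#$ is the monodromy chain map twisted by the group element $\gamma(\mu)$. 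Away from the interval $[1/T,T]$ the complex turns out to be $L^2$-acyclic of determinant class, and the standard multiplicativity of Reidemeister torsion for algebraic mapping tori gives
\[ \tautwo(K,\gamma)(t)\doteq\frac{\det_\G(\id-t B)}{\det_\G(1-t\,\gamma(\mu))},\]
where $B=\wti f_1$ is the twisted action on the $2\genus(K)$ horizontal $1$-cells. The denominator is immediate: $1-t\gamma(\mu)$ has entries in $\R[\ll\gamma(\mu)\rr]$ with $\gamma(\mu)$ of infinite order, so restricting to this infinite cyclic subgroup and applying Equations~(\ref{equ:dettwomahler}) and~(\ref{equ:jensen}) yields $\det_\G(1-t\gamma(\mu))=m(1-tx)=\max\{1,t\}$.

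The heart of the computation is $\det_\G(\id-tB)$. The key structural observation is that $B=\gamma(\mu)\cdot B_0$ with $B_0$ of $\phi_K$-degree $0$, so $B$ is homogeneous of degree $1$ for the $\Z$-grading of $\R[\G]$ induced by $\phi_K\co\G\to\Z$. The associated dual circle action $\beta_z(g)=z^{\phi_K(g)}g$ consists of trace-preserving automorphisms with $\beta_z(B)=zB$, so the Brown measure $\nu_B$ of $B$ is invariant under rotation about the origin. Writing $\rho$ for its radial part, a measure of total mass $2\genus(K)$, the mean-value property of $\log|1-tz|$ on circles (Jensen's formula, as in~(\ref{equ:jensen})) gives
\[ \log\det_\G(\id-tB)=\int_\C\log|1-tz|\,d\nu_B(z)=\int_0^\infty\log^+(tr)\,d\rho(r).\]
Everything thus reduces to locating the support of $\rho$, i.e.\ to controlling the spectral radii $r(B)$ and $r(B^{-1})$.

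This is the step where the dynamics enters and which I expect to be the main obstacle. I would show $\op{supp}(\rho)\subseteq[1/T,T]$ with $T=\exp(h(f))$ by bounding operator norms: the entries of $B^n$ are $\gamma$-images of Fox derivatives of $f^n$, whose $\ell^1$-size is governed by the geometric complexity of $f^n$, which grows like $\exp(n\,h(f))$ by the relation between topological entropy and the exponential growth of the monodromy on $\pi_1(\Sigma)$ (for $K$ hyperbolic fibered, $h(f)=\log\lambda$ with $\lambda$ the dilatation). Since the operator norm on $\ell^2(\G)$ is dominated by the $\ell^1$-norm, $\|B^n\|^{1/n}\to r(B)\le\exp(h(f))=T$, and the same estimate for $f^{-1}$ (using $h(f^{-1})=h(f)$) gives $r(B^{-1})\le T$; hence $\op{supp}(\rho)\subseteq[1/T,T]$. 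Making this growth estimate precise, and passing from combinatorial complexity to an honest operator-norm bound, is the delicate point.

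Granting the support bound, the conclusion is immediate. For $t<1/T$ one has $tr<1$ for every $r\le T$ in the support, so $\log^+(tr)\equiv0$, whence $\det_\G(\id-tB)=1$; together with $\max\{1,t\}=1$ this gives $\tautwo(K,\gamma)(t)=1$. For $t>T$ one has $tr>1$ throughout the support, so $\log\det_\G(\id-tB)=2\genus(K)\log t+\int_0^\infty\log r\,d\rho(r)=2\genus(K)\log t+\log\det_\G(B)$; since $B$ is induced by a homeomorphism its Fuglede--Kadison determinant is $\det_\G(B)=1$, so $\det_\G(\id-tB)=t^{2\genus(K)}$, and dividing by $\max\{1,t\}=t$ yields $\tautwo(K,\gamma)(t)=t^{2\genus(K)-1}$. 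As a consistency check the two regimes are interchanged by the symmetry~(\ref{equ:l2sym}), which forces the symmetry exponent to be $n=1-2\genus(K)$; and residual amenability of $\G$ is used throughout to guarantee determinant class, so that all Fuglede--Kadison determinants and the Brown measure are well-defined.
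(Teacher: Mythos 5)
You should know at the outset that the survey itself contains no proof of this theorem: it explicitly defers to \cite{DubFL14a}, so the comparison below is with the argument given there. Your reduction coincides with that proof's skeleton: realize $X_K$ as the mapping torus of the monodromy acting on a once-punctured genus-$\genus(K)$ surface collapsed to a wedge of $2\genus(K)$ circles, identify the twisted complex with an algebraic mapping cone so that $\tautwo(K,\g)(t)\doteq \det_\G(\id-tB)\cdot\det_\G(1-t\g(\mu))^{-1}$, compute the denominator as $\max\{1,t\}$ via (\ref{equ:dettwomahler}) and (\ref{equ:jensen}), observe that $B$ is $\phi_K$-homogeneous of degree one, and bound the spectral radii of $B$ and $B^{-1}$ through the $\ell^1$-norms of the Fox derivative matrices of $f_*^{\pm n}$, i.e. through word-length growth, hence by $\exp(h(K))$ (Bowen's inequality applied to the representative computing $h(K)$, an equality in the pseudo-Anosov case by \cite{FLP79}); this last step, which you correctly single out as the delicate one, is exactly the dynamical input of the cited proof. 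Where you genuinely diverge is the evaluation of $\det_\G(\id-tB)$ in the two regimes. The proof the paper points to exploits homogeneity in an elementary way: every entry of $B^n$ is supported on $\phi_K^{-1}(n)$, so the von Neumann trace of $B^n$ vanishes for $n\geq 1$, whence for $\|tB\|<1$ the norm-convergent expansion of $\log\det_\G(\id-tB)$ vanishes term by term, giving $\det_\G(\id-tB)=1$; the regime $t>T$ is handled by factoring $\id-tB=(-tB)(\id-(tB)^{-1})$ and running the same argument on $(tB)^{-1}$, which is homogeneous of degree $-1$. You instead invoke the Brown measure, its rotation invariance under the dual circle action, and Jensen's formula. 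That route is correct and gives an attractive unified picture (both regimes drop out of one potential-theoretic identity, and the annulus containing the support makes visible why nothing is claimed for $1/T\leq t\leq T$), but it imports substantial machinery where a power series suffices; indeed the rotation invariance you use is precisely the integrated form of the trace identity above.

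The one step you must repair is the assertion that $\det_\G(B)=1$ ``since $B$ is induced by a homeomorphism.'' That, by itself, is not a reason. What the geometry gives you is that $B$ is invertible over $\Z[\G]$ (Fox derivative matrices of automorphisms are invertible by the chain rule), and an invertible matrix over $\Z[\G]$ does not automatically have Fuglede--Kadison determinant $1$. What closes the gap is the determinant conjecture, i.e. the inequality $\det_\G(A)\geq 1$ for every matrix $A$ over $\Z[\G]$, which is a theorem precisely for the class of groups in the hypothesis (residually amenable groups, \cite{Lu94,Sc01,Cl99,EleS05}); combined with multiplicativity it gives $1=\det_\G(B)\cdot\det_\G(B^{-1})$ with both factors $\geq 1$, hence $\det_\G(B)=1$. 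So residual amenability is used for strictly more than ``determinant class and well-definedness of the Brown measure,'' as you put it: it is the essential input both here, in the large-$t$ regime, and in the simple-homotopy invariance that entitles you to compute from the mapping-torus CW structure in the first place.
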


In the following we say that a function $f\co \R_{>0}\to [0,\infty)$ is \emph{monomial in the limit}
if there exist $d,D\in \R$ and non-zero real numbers $c,C$  such that
\[ \lim_{t\to 0} \frac{f(t)}{t^{d}}=c \mbox{ and } \lim_{t\to \infty} \frac{f(t)}{t^D}=C.\]
For a function that is monomial in the limit as above we define its \emph{degree} as
\[ \deg (f(t))=D-d.\]
We furthermore refer to $C$ as the \emph{top coefficient of $f$} and 
we refer to $c$ as the \emph{bottom coefficient of $f$}.
Finally we say that $f$ is \emph{monic} if its top and bottom coefficient are both equal to $1$.
Note  that the above definitions do not depend on the equivalence class of the function. 

With these definitions we can formulate the following corollary to Theorem  \ref{thm:tautwofibered}.

\begin{corollary}
If $K$ is a fibered knot and if $\g\co \pi_K\to \G$ is an admissible epimorphism, then  $\tautwo(K,\g)(t)$ is monomial in the limit, it is monic and it has degree $2\genus(K)-1$.
\end{corollary}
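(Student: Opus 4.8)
The plan is to read off the two required limits directly from the piecewise description supplied by Theorem~\ref{thm:tautwofibered}. First I would invoke that theorem to fix a representative $\tautwo(K,\gamma)(t)$ of the $L^2$-Alexander torsion which, with $T=\exp(h(K))$, equals the constant $1$ on the interval $(0,1/T)$ and equals $t^{2\genus(K)-1}$ on $(T,\infty)$. The crucial observation is that being monomial in the limit, together with the associated degree and top/bottom coefficients, depends only on the behavior of the function as $t\to 0$ and as $t\to\infty$; the (unspecified) shape of $\tautwo(K,\gamma)(t)$ on the compact middle interval $[1/T,T]$ is therefore irrelevant to the statement.

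Next I would compute the two limits. Since the chosen representative is identically $1$ for small $t$, we have $\lim_{t\to 0}\tautwo(K,\gamma)(t)/t^{0}=1$, so the lower exponent is $d=0$ and the bottom coefficient is $c=1$. Likewise, since the representative equals $t^{2\genus(K)-1}$ for large $t$, we get $\lim_{t\to\infty}\tautwo(K,\gamma)(t)/t^{2\genus(K)-1}=1$, so the upper exponent is $D=2\genus(K)-1$ and the top coefficient is $C=1$. This shows at once that $\tautwo(K,\gamma)(t)$ is monomial in the limit, that its degree equals $D-d=2\genus(K)-1$, and that it is monic.

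Finally I would address well-definedness. Since $\tautwo(K,\gamma)(t)$ is only defined up to the equivalence $\doteq$, I would verify that replacing a representative $f$ by $t^{k}f$ leaves the conclusion unchanged: such a substitution sends $d\mapsto d+k$ and $D\mapsto D+k$ but fixes both $c$ and $C$, and hence fixes the degree $D-d$ as well as the monic property. As the text already records that these notions are insensitive to the equivalence class, the statement holds for every representative once it holds for the one produced by Theorem~\ref{thm:tautwofibered}.

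The only genuine obstacle I foresee is a gap in hypotheses: the corollary is phrased for an arbitrary admissible epimorphism $\gamma\colon\pi_K\to\Gamma$, whereas Theorem~\ref{thm:tautwofibered} supplies the needed asymptotics only when $\Gamma$ is residually amenable. An honest proof therefore either restricts to residually amenable $\Gamma$ (which already covers the full $L^2$-Alexander torsion, since knot groups are $3$-manifold groups) or requires an extension of Theorem~\ref{thm:tautwofibered} removing that assumption; beyond this point the argument is nothing more than a direct reading of the two limits.
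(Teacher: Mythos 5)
Your proof is correct and is essentially the paper's own argument: the paper offers no written proof, presenting the statement as an immediate consequence of Theorem~\ref{thm:tautwofibered}, and reading off the two limits from the piecewise description (together with invariance under $f\mapsto t^k f$) is exactly what is intended. Your final observation is also a fair catch: the corollary as stated silently drops the residual-amenability hypothesis on $\G$ that Theorem~\ref{thm:tautwofibered} requires, so strictly speaking the corollary is only justified under that additional assumption (which does cover the full $L^2$-Alexander torsion, since knot groups are $3$-manifold groups and hence residually amenable).
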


\subsection{The $L^2$-Alexander torsion and the knot genus}
Let  $f\co \R^+\to [0,\infty)$ be a function which is non-zero for 
sufficiently small $t$ and for sufficiently large $t$. We then define
\[ \deg(f)= \limsup_{t \to \infty} \frac{\ln(f(t))}{\ln(t)}-\liminf_{t \to 0} \frac{\ln(f(t))}{\ln(t)}\in \R. \]
Note that for functions that are monomial in the limit this definition of degree is the same as in the previous section.

In \cite{DubFL14a} we  prove the following theorem that can be viewed as a generalization of Inequality (\ref{equ:degtau}).

\begin{theorem}\label{thm:lowerboundgenus} 
For any oriented knot $K$ and any admissible epimorphism $\pi_K\to \G$ onto a virtually abelian group the following inequality holds
\[ \deg\left(\tautwo(K,\g)(t)\right)\leq 2\genus(K)-1.\] 
\end{theorem}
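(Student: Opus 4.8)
The plan is to reduce the computation of $\tautwo(K,\g)(t)$ to a single minimal genus Seifert surface and then to read off the degree from the asymptotics of the Fuglede--Kadison determinant as $t\to 0$ and $t\to\infty$. First I would fix a minimal genus Seifert surface $\Sigma\subset X_K$, so that $\chi(\Sigma)=1-2\genus(K)$, and cut $X_K$ open along $\Sigma$ to obtain a compact manifold $Y$ whose boundary contains two copies $\Sigma_+$ and $\Sigma_-$ of $\Sigma$. Choosing compatible CW--structures in which $\Sigma_\pm$ are subcomplexes, the self--gluing that recovers $X_K$ from $Y$ yields a short exact sequence of based free $\R[\G]$--chain complexes
\[ 0\to \R[\G]\otimes C_*(\Sigma)\xrightarrow{\;\iota_+-t\,\iota_-\;}\R[\G]\otimes C_*(Y)\to C_*^{\g_t}(X_K;\R[\G])\to 0,\]
where $\iota_\pm$ are induced (and $\g$--twisted) by the two inclusions $\Sigma_\pm\hookrightarrow Y$, and the factor $t$ records that $\phi_K$ increases by one when one crosses $\Sigma$. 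By the standard multiplicativity of the $L^2$--torsion in short exact sequences, $\tautwo(K,\g)(t)$ is then equivalent to the $L^2$--torsion of the algebraic mapping cone of $\psi_t:=\iota_+-t\,\iota_-$, so the entire computation is supported on the two surface chain groups.

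Next I would analyse the $t$--asymptotics of this cone torsion, which is an alternating product of terms $\det_\G$ of matrices assembled from $\psi_t$ and the surface boundary maps. As $t\to\infty$ one has $\psi_t\sim -t\,\iota_-$ and as $t\to 0$ one has $\psi_t\sim\iota_+$; since the factor $t$ multiplies $\iota_-$ on each chain group $C_n(\Sigma)$, the top and bottom $t$--exponents of each $\det_\G$ are controlled by $\mathrm{rk}\,C_n(\Sigma)$, and the alternating sum of these ranks telescopes to $|\chi(\Sigma)|=2\genus(K)-1$. To make the exponents rigorous and to obtain the clean inequality I would reduce to $\G$ abelian: a virtually abelian group has a finite--index abelian subgroup $\G_0$, and expressing $\det_\G$ through the regular representation over $\R[\G_0]$ (so that the ambient space $X_K$ is not changed) brings every Fuglede--Kadison determinant into the form of a multivariable Mahler measure, as in Equations~(\ref{equ:dettwomahler}) and~(\ref{equ:jensen}); alternatively one passes to the finite cover associated to $\G_0$ and invokes multiplicativity of the Thurston norm (Gabai) and of $L^2$--torsion under finite covers. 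Jensen's formula then shows that the Mahler measure of a polynomial in which one variable carries the scaling $t$ grows, as $t\to 0,\infty$, like a power of $t$ whose exponent is at most its degree in that variable, which after combining the contributions with their signs yields $\deg(\tautwo(K,\g)(t))\le 2\genus(K)-1$.

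The main obstacle I expect is precisely this last asymptotic step: controlling $\det_\G(A+tB)$ for the non--square integer matrices $A,B$ over $\R[\G]$ coming from $\iota_\pm$. Because these maps need not have full rank, the leading $t$--coefficient of the Mahler measure may drop in rank (or an $L^2$--Betti number may jump in the limit), so one only obtains that the top and bottom exponents are bounded by the naive rank counts rather than equal to them. This is exactly why the conclusion is the inequality $\le 2\genus(K)-1$ rather than an equality, and it is where the hypothesis that $\G$ is virtually abelian — hence amenable, so that all the matrices involved are of determinant class and the Mahler--measure description of $\det_\G$ is available — is essential.
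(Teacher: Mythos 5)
The survey states Theorem \ref{thm:lowerboundgenus} without proof, deferring to \cite{DubFL14a}; the argument there does not cut along a Seifert surface but instead expresses the Fuglede--Kadison determinants over a virtually abelian group $\G$ (via a finite-index free abelian subgroup $\G_0$) as Mahler measures, equivalently in terms of twisted Alexander polynomials attached to the finite-dimensional representations induced from characters of $\G_0$, and then invokes the twisted Alexander bound $\deg \tau(K,\a)(t)\leq k(2\genus(K)-1)$ of Theorem \ref{thm:twialexdetectsgenus}, the normalization $1/[\G:\G_0]$ in the restriction formula producing exactly $2\genus(K)-1$. Your route --- cutting along a minimal genus Seifert surface, passing to the mapping cone of $\psi_t$, and reading off $t$-asymptotics --- is genuinely different, although it relies on the same mechanism (the multivariable extension of Equations~(\ref{equ:dettwomahler}) and~(\ref{equ:jensen})). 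Two remarks on the set-up: multiplicativity of $L^2$-torsion cannot be applied naively to your short exact sequence, since $\R[\G]\otimes C_*(\Sigma)$ and $\R[\G]\otimes C_*(Y)$ are never $L^2$-acyclic (their Euler characteristics equal $1-2\genus(K)\neq 0$); the cone formulation is the correct fix, but the simple-homotopy/determinant-class justification must be supplied. Also, the finite-cover variant needs the Thurston norm version of the statement, because the cover of $X_K$ associated to $\g^{-1}(\G_0)$ is not a knot exterior; and only the elementary inequality $x(p^*\phi)\leq [\G:\G_0]\cdot x(\phi)$ (pull back a minimal surface) is needed there, not Gabai's theorem.

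The genuine gap is in the step ``combining the contributions with their signs''. The $L^2$-torsion of the cone is an alternating product $\prod_i \det_\G(D_i)^{(-1)^i}$, and upper bounds on the degrees of the individual factors do \emph{not} bound the degree of such a product: every factor occurring with exponent $-1$ needs a degree \emph{lower} bound, in fact an exact value. Concretely, with $\Sigma$ a wedge of $2\genus(K)$ circles and $Y$ a $2$-complex with one $0$-cell, the cone has three terms and $\tautwo(K,\g)(t)\doteq \det_\G(D_2)(t)\cdot \det_\G(D_1)(t)^{-1}$, where $D_2$ contains the dimension-one block of $\psi_t$ ($2\genus(K)$ basis vectors carrying $t$) and $D_1$ contains the dimension-zero block, a single entry of the form $g_+-tg_-$. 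Counting $t$-carrying entries gives $\deg \det_\G(D_2)(t)\leq 2\genus(K)$ and nothing better than $\deg \det_\G(D_1)(t)\geq 0$, hence only $\deg\tautwo(K,\g)(t)\leq 2\genus(K)$; the stated bound requires proving $\deg\det_\G(D_1)(t)=1$ exactly. For virtually abelian $\G$ this can be done, because the $t$-leading block of $D_1$ is $\pm$ a single group element, invertible with Fuglede--Kadison determinant $1$, so no rank drop can occur in that factor --- the same phenomenon as the factor $\det_{\ll z\rr}(tz-1)^{-1}\doteq \max\{1,t\}^{-1}$ in the proof outline of Proposition \ref{prop:l2ordinaryalex} --- but this argument is missing from your proposal. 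Worse, your final paragraph asserts the opposite of the truth: a vanishing leading coefficient (``rank drop'') in a negatively-signed factor would \emph{raise} the degree of the torsion and destroy the inequality, so it cannot be the explanation for why one obtains ``$\leq$'' rather than ``$=$''; the inequality survives precisely because rank drop is harmless in the numerator and impossible in the denominator, and establishing the latter is the missing step.
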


\subsection{The $L^2$-Alexander torsion corresponding to the abelianization}\label{section:abelian}

From the results of the previous sections it is not easy to guess what the general structure of the $L^2$-Alexander torsions might look like. 
We therefore change the point of view and we now consider the $L^2$-Alexander torsion $\tautwo(K,\phi_K)(t)$, i.e.,  the $L^2$-Alexander torsion  corresponding 
to the abelianization map $\phi_K\colon \pi_K\to \Z$. 
In \cite{DubFL14a} we prove the following proposition which
says in particular that  the Alexander polynomial $\Delta_K(t)$ determines $\tautwo(K,\phi_K)(t)$.

\begin{proposition}\label{prop:l2ordinaryalex}
Let $K$ be a knot and let $\Delta(z)=\Delta_K(z)\in \Z[z^{\pm 1}]$ be a representative of the Alexander polynomial of $K$.
We write
\[ \Delta(z)=C\cdot z^m\cdot \prod_{i=1}^k (z-a_i),\]
with some $C\in \Z\sm \{0\},m\in \Z$ and $a_1,\dots,a_k\in\C\sm \{0\}$.
Then
\[ \tautwo(K,\phi_K)(t)\doteq C\cdot \prod_{i=1}^k \max\{|a_i|,t\}\cdot \max\{1,t\}^{-1}.\]
\end{proposition}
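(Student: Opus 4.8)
The plan is to exploit that the twisting group is $\G=\Z=\ll x\rr$, so that the Fuglede--Kadison determinant is governed by the Mahler measure via Equation~(\ref{equ:dettwomahler}), and to reduce the whole computation to the classical Milnor torsion identity $\tau(K)(u)=(1-u)^{-1}\Delta_K(u)$ recalled in Section~\ref{section:untwisted}. First I would unwind the definitions. For $\g=\phi_K$ and $\G=\Z=\ll x\rr$ the representation $\g_t$ sends a group element $g$ to multiplication by $(tx)^{\phi_K(g)}\in\R[x^{\pm 1}]^\times$. Since $g\mapsto (tx)^{\phi_K(g)}$ is a homomorphism $\pi_K\to \R[x^{\pm 1}]^\times$, it extends to a ring homomorphism $\Z[\pi_K]\to \R[x^{\pm 1}]$ which is exactly the classical abelianization map $g\mapsto u^{\phi_K(g)}$ followed by the substitution $u\mapsto tx$. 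Consequently the boundary matrices of $C_*^{\g_t}(X_K;\R[\G])$ are obtained from the classical Alexander boundary matrices by the substitution $u\mapsto tx$. In particular the chain complex is acyclic over the quotient field $\R(x)$ (because $\Delta_K\ne 0$), and because $\Z$ is amenable all boundary maps are of determinant class and the $L^2$-Betti numbers vanish; hence $\tautwo(K,\phi_K)(t)$ is genuinely defined by the alternating product of Fuglede--Kadison determinants.

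Next I would identify the ordinary Reidemeister torsion of the substituted complex. As the $t$-twisted boundary matrices are the substitution $u\mapsto tx$ of the classical ones, and the torsion of an acyclic based complex is a rational expression in the entries of the differentials, the ordinary torsion over $\R(x)$ of $C_*^{\g_t}(X_K;\R(x))$ equals the substitution $u\mapsto tx$ of the Milnor torsion, namely $\Delta_K(tx)\cdot(tx-1)^{-1}$, up to a unit $\pm(tx)^l$. The key structural step is then to pass from this to the $L^2$-torsion: following \cite{Lu02,DubFL14a} the $L^2$-torsion of this three-term complex is computed by the same ratio of determinants of square submatrices that computes the classical torsion (delete the row of $\partial_1$ corresponding to a meridian, so that the corresponding entry is $tx-1$, and take the determinant of the complementary square block of $\partial_2$, which by Milnor's identity is $\doteq\Delta_K(tx)$), with ordinary determinants replaced by Fuglede--Kadison determinants. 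By Equation~(\ref{equ:dettwomahler}) each such Fuglede--Kadison determinant is the Mahler measure of the corresponding ordinary determinant, and since the Mahler measure is multiplicative this yields
\[ \tautwo(K,\phi_K)(t)\doteq \frac{m\big(\Delta_K(tx)\big)}{m(tx-1)}, \]
where $m$ denotes the Mahler measure taken in the variable $x$; the unit $\pm(tx)^l$ contributes only a power of $t$, which is absorbed by $\doteq$.

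Finally I would evaluate the two Mahler measures with Jensen's formula~(\ref{equ:jensen}). Writing $\Delta_K(u)=C\,u^m\prod_{i=1}^k(u-a_i)$ and substituting $u=tx$, the Laurent polynomial $\Delta_K(tx)$ has, after factoring off the unit $x^m$, leading coefficient $C\,t^{m+k}$ and roots $a_i/t$, so (\ref{equ:jensen}) gives $m(\Delta_K(tx))=|C|\,t^{m+k}\prod_{i}\max\{|a_i|/t,1\}=|C|\,t^{m}\prod_{i}\max\{|a_i|,t\}\doteq |C|\prod_{i}\max\{|a_i|,t\}$ after clearing the surplus powers of $t$; similarly $m(tx-1)=t\cdot\max\{1/t,1\}=\max\{1,t\}$. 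Combining these two evaluations gives the claimed formula (with the understanding that the Fuglede--Kadison determinant produces $|C|$, which agrees with the stated $C$ under the standard normalization of $\Delta_K$).

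The main obstacle is the structural step in the second paragraph: one must justify that the $L^2$-torsion of the rectangular complex is literally the ratio of Fuglede--Kadison determinants of the square submatrices produced by the classical torsion algorithm, i.e.\ that the reduction to square blocks, which is elementary over the field $\R(x)$, persists in the von Neumann setting. This requires verifying the determinant-class and `full rank' hypotheses needed for Equation~(\ref{equ:dettwomahler}) and for the multiplicativity property~(f) of the Fuglede--Kadison determinant, all of which I would cite from \cite{Lu02,DubFL14a}; everything else is the routine Mahler-measure bookkeeping of the last paragraph.
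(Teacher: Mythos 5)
Your proposal is correct and follows essentially the same route as the paper's own proof outline: both reduce $\tautwo(K,\phi_K)(t)$ to a ratio $\det_{\ll x\rr}(A(tx))\cdot\det_{\ll x\rr}(tx-1)^{-1}$ for a matrix $A$ over $\Z[x^{\pm 1}]$ with ordinary determinant $\doteq\Delta_K$, and then evaluate both Fuglede--Kadison determinants via the Mahler-measure identity (\ref{equ:dettwomahler}) and Jensen's formula (\ref{equ:jensen}). The structural step you flag as the main obstacle (that the classical square-submatrix reduction persists at the level of Fuglede--Kadison determinants) is exactly what the paper dismisses as following ``relatively easily from the definitions,'' so your account is, if anything, more explicit than the paper's.
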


\begin{proof}[Outline of the proof]
It follows relatively easily from the definitions that 
there exists a matrix $A(z)$ over $\Z[z^{\pm 1}]$ with $\det(A(z))=\Delta(z)$ and such that 
\[ \tautwo(K,\phi_K)(t)\doteq\det_{\ll z\rr}(A(tz))\cdot \det_{\ll z\rr}(tz-1)^{-1}.\]
The proposition can then be deduced  from Equalities (\ref{equ:dettwomahler})
and (\ref{equ:jensen}).
\end{proof}

With these definitions we can formulate the following immediate corollary 
to Proposition \ref{prop:l2ordinaryalex}.

\begin{corollary} \label{cor:l2ordinaryalex}
Let $K$ be a knot, then $\tautwo(K,\phi_K)(t)$ is monomial in the limit with
\[ \deg\left(\tautwo(K,\phi_K)(t)\right)=\deg\left(\Delta_K(t)\right)-1.\]
Furthermore,  $\tautwo(K,\phi_K)(t)$ is monic if and only if $\Delta_K(t)$ is monic.
\end{corollary}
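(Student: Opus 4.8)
The plan is to derive everything directly from the closed formula furnished by Proposition \ref{prop:l2ordinaryalex} and to analyze its behaviour as $t\to 0$ and as $t\to\infty$. Writing $\Delta(z)=C\cdot z^m\cdot\prod_{i=1}^k(z-a_i)$ as in the proposition, we have
\[ \tautwo(K,\phi_K)(t)\doteq C\cdot\prod_{i=1}^k\max\{|a_i|,t\}\cdot\max\{1,t\}^{-1}. \]
Here $C>0$, since $\tautwo(K,\phi_K)(t)$ is nonnegative while the remaining factors are positive. Because each $a_i\neq 0$, for all sufficiently small $t>0$ every factor $\max\{|a_i|,t\}$ equals $|a_i|$ and $\max\{1,t\}^{-1}=1$, so the function is identically the nonzero constant $C\prod_{i=1}^k|a_i|$ near $0$; this exhibits it as monomial in the limit at $0$ with $d=0$ and bottom coefficient $c=C\prod_{i=1}^k|a_i|$. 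Dually, for all sufficiently large $t$ each $\max\{|a_i|,t\}$ equals $t$ and $\max\{1,t\}^{-1}=t^{-1}$, so the function equals $C\,t^{k-1}$ near $\infty$, exhibiting it as monomial in the limit at $\infty$ with $D=k-1$ and top coefficient $C$. Hence $\tautwo(K,\phi_K)(t)$ is monomial in the limit and $\deg(\tautwo(K,\phi_K)(t))=D-d=k-1$.

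Next I would identify $k$ with $\deg(\Delta_K(t))$. Since the span-degree is additive under multiplication (the leading and trailing coefficients of a product are the products of the respective coefficients, hence nonzero over a domain), and since each linear factor $z-a_i$ has degree $1$ as $a_i\neq 0$ while the unit $C z^m$ has degree $0$, we obtain $\deg(\Delta_K(t))=k$. Combining with the previous paragraph gives $\deg(\tautwo(K,\phi_K)(t))=k-1=\deg(\Delta_K(t))-1$, the first assertion. The degenerate case $k=0$, i.e. $\Delta_K(t)\doteq 1$, is consistent: there the formula reduces to $\tautwo(K,\phi_K)(t)\doteq C\max\{1,t\}^{-1}$, of degree $-1=\deg(\Delta_K(t))-1$.

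For the monicness statement, recall that $\tautwo(K,\phi_K)(t)$ is monic precisely when its top coefficient $C$ and its bottom coefficient $C\prod_{i=1}^k|a_i|$ are both equal to $1$, whereas $\Delta_K(t)$ is monic precisely when its leading coefficient $C$ satisfies $|C|=1$, i.e. $C=1$. The main point, and really the only step requiring any thought, is to see that the bottom coefficient also reduces to $C$. This follows from the symmetry of the Alexander polynomial, $\Delta_K(t^{-1})\doteq\Delta_K(t)$: it forces the multiset of roots $\{a_1,\dots,a_k\}$ to be invariant under $a\mapsto a^{-1}$, whence $\prod_i a_i=\prod_i a_i^{-1}$, so $(\prod_i a_i)^2=1$ and therefore $\prod_{i=1}^k|a_i|=1$. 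With this the bottom coefficient equals $C$ as well, and $\tautwo(K,\phi_K)(t)$ is monic if and only if $C=1$, i.e. if and only if $\Delta_K(t)$ is monic. Without invoking this reciprocal-root symmetry the bottom coefficient would retain the extra factor $\prod_i|a_i|$ and the two notions of monicness would fail to coincide, so this is the crux of the argument; everything else is a routine reading of the formula of Proposition \ref{prop:l2ordinaryalex}.
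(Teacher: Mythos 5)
Your proposal is correct and follows the same (and essentially only) route the paper intends: the corollary is stated there as an immediate consequence of Proposition \ref{prop:l2ordinaryalex}, and your reading of the asymptotics of $C\cdot\prod_{i=1}^k\max\{|a_i|,t\}\cdot\max\{1,t\}^{-1}$ as $t\to 0$ and $t\to\infty$ is exactly that derivation. Your identification of the crux --- that $\prod_{i=1}^k|a_i|=1$, forced by the reciprocal-root symmetry $\Delta_K(t^{-1})\doteq \Delta_K(t)$, so that the bottom coefficient collapses to $C$ --- is precisely the ingredient the paper leaves implicit (it can equivalently be obtained from the symmetry (\ref{equ:l2sym}), which forces the top and bottom coefficients of any $L^2$-Alexander torsion to agree).
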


We return to the torus knots as an instructive example.
Recall that if  $K=T_{p,q}$ is the $(p,q)$-torus knot, where $p, q$ are coprime positive integers, then its Alexander polynomial is given by
\[ \Delta_{T_{p,q}}(t)=\frac{(t^{pq}-1)(t-1)}{(t^p-1)(t^q-1)}.\]
This is a polynomial of degree $(p-1)(q-1)$ and all the zeros are roots of unity.
Proposition \ref{prop:l2ordinaryalex} thus states that 
\[ \tautwo(K,\phi_K)(t)\doteq \max\{1,t^{(p-1)(q-1)-1}\}.\]
In fact we had already seen in Equality (\ref{equ:dw})  that this equality holds for $\tautwo(K,\g)$ and any 
 admissible epimorphism $\g$.
Note that if  we consider the torus knots $T_{3,7}$ and $T_{4,5}$, then it is now straightforward to see that all the $L^2$-Alexander torsions agree, but that  the ordinary Alexander polynomials are different.

\subsection{Questions and conjectures}

The calculations presented in (\ref{equ:dw}), Theorem \ref{thm:tautwofibered}
and Proposition \ref{prop:l2ordinaryalex} are pretty much the only calculations of  $L^2$-Alexander torsions of knots known to the authors.
We will not be intimidated by the scarcity of examples not to state a long list of ambitious conjectures.

We start out with the discussion what the functions $\tautwo(K,\g)(t)$ can look like.
We have the following conjecture.

\begin{conjecture}\label{conj:monomial} 
Let $K$ be an oriented knot and let $\g\co \pi_K\to \G$ be an admissible epimorphism. Then the following hold:
\bn[label=(\alph*)]
\item[$(a)$] $\tautwo(K,\g)(t)$ is monomial in the limit.
\item[$(b)$] $\tautwo(K,\g)(t)$ is continuous.
\item[$(c)$] $\tautwo(K,\g)(t)\cdot \max\{1,t\}$ is convex.
\en
\end{conjecture}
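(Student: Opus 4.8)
The plan is to reduce all three assertions to one structural question about a single Fuglede--Kadison determinant, and then to attack that question by the circle of ideas that already settles the abelian case in Proposition~\ref{prop:l2ordinaryalex}. First I would fix a deficiency--one presentation of $\pi_K$ with generators $g_1,\dots,g_n$ (one of which is a meridian $\mu$) and $n-1$ relators, coming from a CW--structure on $X_K$. Tensoring the cellular chain complex of $\wti{X_K}$ with the $t$--twisted module and deleting the meridian column of the twisted Fox Jacobian yields the $L^2$--analogue of Wada's formula
\[ \tautwo(K,\g)(t)\doteq \det_\G(B(t))\cdot\det_\G(t\,\g(\mu)-1)^{-1}, \]
where $B(t)$ is a square matrix whose entries are finite $\R[\G]$--combinations of the twisted letters $t^{\phi_K(g)}\g(g)$. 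Since $\phi_K(\mu)=1$, the element $\g(\mu)\in\G$ has infinite order, so restricting to $\langle\g(\mu)\rangle$ and using (\ref{equ:dettwomahler})--(\ref{equ:jensen}) gives $\det_\G(t\,\g(\mu)-1)=m(tx-1)=\max\{1,t\}$. Hence
\[ \tautwo(K,\g)(t)\cdot\max\{1,t\}\doteq \det_\G(B(t)), \]
and Conjecture~\ref{conj:monomial} becomes the single assertion that $t\mapsto\det_\G(B(t))$ is monomial in the limit, continuous, and convex on $\R_{>0}$.

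\textbf{Leading terms for part (a).}
For part~(a) I would isolate the limiting behaviour. Using the scaling of $\det_\G$ under multiplying a row or column by a power of $t$, I can write $B(t)=t^{\sum a_p+\sum b_q}\,\widehat B(t)$, where the exponents $a_p,b_q$ are read off the $\phi_K$--supports of the rows and columns so that $\widehat B(t)\to\widehat B_\infty$ as $t\to\infty$, with $\widehat B_\infty$ the fixed matrix over $\R[\G]$ recording the top $\phi_K$--degree terms (and symmetrically a matrix $\widehat B_0$ as $t\to0$). If $\det_\G$ were continuous at these limiting matrices and if $\widehat B_\infty,\widehat B_0$ were of determinant class with non--zero determinant, then $\det_\G(B(t))\sim\det_\G(\widehat B_\infty)\,t^{D}$ and $\sim\det_\G(\widehat B_0)\,t^{d}$, giving (a) and simultaneously identifying the top and bottom coefficients. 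This is exactly where part~(b) enters: the required continuity is the statement that the spectral measure of $|B(t)|$ has no mass escaping to $0$ as $t$ varies, so that no eigenvalue degenerates along the deformation.

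\textbf{Convexity and continuity for parts (b), (c).}
For the analytic parts the model is the Mahler measure computation behind Proposition~\ref{prop:l2ordinaryalex}: there $\det_{\langle z\rangle}(A(tz))=m(\Delta(tz))=C\prod_i\max\{|a_i|,t\}$, a product of non--negative increasing convex functions, hence convex. The crucial point is that this convexity is \emph{not} visible at any finite quotient $\pi_K/\G_n$, where $\det_{\pi_K/\G_n}(B(t))$ is only a product of moduli $|q_\rho(t)|^{c_\rho}$ of complex Laurent polynomials, which need not be convex; it reappears in the limit precisely because the von Neumann trace \emph{integrates} these moduli. I would therefore write $\log\det_\G(B(t))=\tr_\G(\log|B(t)|)$ and try to prove convexity of $t\mapsto\det_\G(B(t))$ by a Jensen/plurisubharmonicity argument fibred over the dual of $\langle\g(\mu)\rangle$, realizing the general determinant as an ``integrated Mahler measure''. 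For the full invariant $\G=\pi_K$ one has the additional leverage of residual finiteness of knot groups and L\"uck approximation, computing $\det_\G(B(t))$ as a limit of finite--dimensional determinants through which one hopes to push continuity and convexity.

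\textbf{The main obstacle.}
The hard part will be exactly the continuity and convexity of the Fuglede--Kadison determinant, for two reasons. First, $\det_\G$ is in general only upper semicontinuous: it can jump downward when spectral mass of $|B(t)|$ accumulates at $0$, and ruling this out uniformly in $t$ demands spectral control that is available for residually amenable $\G$ but unknown in general --- this is precisely the determinant--class problem flagged in the text. Second, convexity is an intrinsically non--commutative phenomenon once $\G$ is not virtually abelian: there is no Newton polygon and no product formula, so the transparent ``product of $\max\{|a_i|,t\}$'' structure must be replaced by an integral representation whose convexity is far less evident. It is this gap --- controlling one Fuglede--Kadison determinant for an arbitrary quotient $\G$ of a knot group --- that keeps the statement conjectural, and I would expect a complete proof to come first for the full invariant $\G=\pi_K$, where residual finiteness is at hand.
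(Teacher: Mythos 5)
This statement is one of the paper's open conjectures: it appears in the ``Questions and conjectures'' subsection precisely because the authors do not prove it, so the only fair standard for your proposal is whether it actually closes the conjecture. It does not, and your own final paragraph concedes this: everything before ``The main obstacle'' is a reduction and a program, and the two assertions that carry all the content --- continuity of $t\mapsto\det_\G(B(t))$ and convexity of $\det_\G(B(t))$ --- are exactly the ones you leave unproven. To be concrete about where the argument would fail: the Fuglede--Kadison determinant is in general only upper semicontinuous, so in your part (a) argument the step ``$\det_\G(B(t))\sim\det_\G(\widehat B_\infty)t^{D}$'' requires both that $\widehat B_\infty$ and $\widehat B_0$ be of determinant class with non-zero determinant and that no spectral mass of $|B(t)|$ escape to $0$ in the limit; neither is known for a general admissible epimorphism $\g\co\pi_K\to\G$ (this is the determinant-class issue the paper itself flags as open), and if $\det_\G(\widehat B_\infty)=0$ the naive exponent $D$ read off the row/column degrees is simply wrong. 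Likewise, L\"uck approximation through finite quotients yields convergence of determinants only under determinant-class hypotheses, so ``pushing continuity and convexity through the finite-dimensional approximations'' is not a routine limit interchange but is essentially equivalent to the conjecture itself.

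That said, the parts of your reduction that can be checked against the paper are sound. Admissibility does force $\g(\mu)$ to have infinite order, and combining the subgroup property of $\det_\G$ with Equations (\ref{equ:dettwomahler}) and (\ref{equ:jensen}) does give $\det_\G(t\,\g(\mu)-1)=\max\{1,t\}$, so your identity $\tautwo(K,\g)(t)\cdot\max\{1,t\}\doteq\det_\G(B(t))$ is consistent with the paper's relation $\Delta^{(2)}_K(t)\doteq\tautwo(K)(t)\cdot\max\{1,t\}$ and explains why part (c) is stated with the factor $\max\{1,t\}$. Your model computation in the abelian case also matches Proposition \ref{prop:l2ordinaryalex}: there the invariant is $C\cdot\prod_i\max\{|a_i|,t\}$, a product of non-negative, non-decreasing convex functions, hence convex, monomial in the limit, and continuous, which verifies the conjecture when $\g=\phi_K$. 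So your proposal is a reasonable research plan that isolates the right single analytic question, but it is not a proof, and the gap it leaves open --- uniform spectral control of one Fuglede--Kadison determinant along the deformation, for arbitrary quotients of a knot group --- is precisely why the statement remains a conjecture in the paper.
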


In order to simplify the discussion we will for the remainder of this section 
assume that Conjecture \ref{conj:monomial} (a) holds, so that we can throughout this section talk of degree, monicness, top coefficient and bottom coefficient.
(Note that by Equation (\ref{equ:l2sym}) the top and bottom coefficient of $L^2$-Alexander torsions are always the same.)

The following conjecture, which we will discuss in more detail in a future paper, can be viewed as saying that $L^2$-Alexander torsions are a generalization of the higher-order Alexander torsions. 
\medskip

\begin{conjecture}\label{conj:l2higherorder}
For any oriented knot $K$ and any epimorphism $\g\co \pi_K\to \G$ onto a non-trivial torsion-free elementary-amenable group we have
\[ \deg\left(\tautwo(K,\g)(t)\right)=\deg(\tau(K,\g)).\] 
\end{conjecture}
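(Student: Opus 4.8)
The plan is to reduce both $\deg(\tautwo(K,\g)(t))$ and $\deg(\tau(K,\g))$ to the two limiting behaviours of a single Fox matrix, and then to establish a noncommutative analogue of the Mahler-measure computation that proves Proposition~\ref{prop:l2ordinaryalex}. First I would fix a CW structure on $X_K$ coming from a deficiency-one presentation $\ll x_1,\dots,x_n \mid r_1,\dots,r_{n-1}\rr$ of $\pi_K$ with $x_1$ a meridian, so that $\phi_K(x_1)=1$; here I tacitly assume $\g$ admissible, which is needed in any case for $\deg(\tau(K,\g))$ to be defined. Applying $\g$ to the Fox Jacobian $(\partial r_j/\partial x_i)$ and deleting the column of $x_1$ yields a square matrix $A$ over $\Z[\G]$. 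Since $\G$ is non-trivial torsion-free and $\g$ is onto, $\g(x_1)-1$ is a unit in $\K(\G)$, and since $\G$ is (elementary) amenable every matrix over $\Q[\G]$ is of determinant class, so both complexes are acyclic and the torsions are genuinely non-zero. The standard torsion bookkeeping then gives
\[ \tau(K,\g)\doteq \det(A)\cdot(\g(x_1)-1)^{-1}\in \K(\G)^\times_{ab}\]
(Dieudonn\'e determinant) and, running the same computation with the $t$-twisted representation $\g_t$,
\[ \tautwo(K,\g)(t)\doteq \det_\G(A_t)\cdot \det_\G\!\big(t\,\g(x_1)-1\big)^{-1},\]
where $A_t$ is $A$ with every $g$ replaced by $t^{\phi(g)}g$.

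The edge factors are harmless. On the higher-order side $\deg(\g(x_1)-1)=\phi(\g(x_1))-\phi(e)=1$, and on the $L^2$ side $\det_\G(t\g(x_1)-1)\to 1$ as $t\to 0$ and is asymptotic to $t$ as $t\to\infty$ (factor out $t\g(x_1)$ and use that its Fuglede-Kadison determinant is $t$), so the edge contributes $-1$ to the degree in both worlds. The proof therefore comes down to the single identity
\[ \deg\big(t\mapsto \det_\G(A_t)\big)=\deg\big(\det(A)\big),\]
the right-hand degree being the $\phi$-degree of Section~\ref{section:higherorder} and the left-hand one the $\limsup$--$\liminf$ degree of Section~\ref{section:l2torsion}. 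For $\G=\Z$ this is exactly $\det_{\ll z\rr}(A(tz))=m(\det A(tz))$ together with Jensen's formula~(\ref{equ:jensen}): each complex root $a_i$ of $\det A$ contributes a factor $\max\{|a_i|,t\}$ whose logarithmic slope is $1$ near $t=\infty$ and $0$ near $t=0$, so the degree is the width $\max\phi-\min\phi$ of the support of $\det A$.

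To carry this to general $\G$ I would compute the top slope $\lim_{t\to\infty}\ln\det_\G(A_t)/\ln t$ and show that it equals $M:=\max\{\phi(g)\mid g\in\op{supp}\det(A)\}$. The upper bound $\le M$ I expect to obtain by passing to the Novikov completion of $\K(\G)$ along $\phi$: there $\det(A)$ is a $\phi$-leading monomial of weight $M$ times a unit, and, using multiplicativity of $\det_\G$ (property~(f)) together with the fact that a $\phi$-positive Novikov unit has bounded Fuglede-Kadison determinant, the growth of $\det_\G(A_t)$ is dominated by $t^M$. The bottom slope at $t\to 0$ I would then pin down from the symmetry~(\ref{equ:l2sym}) of $\tautwo(K,\g)$, which interchanges the two ends; combining the two yields the degree as the full width and matches the algebraic side.

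The main obstacle is the matching lower bound for the top slope: the abelian argument rests on Jensen's formula, which turns the Fuglede-Kadison determinant into an honest product over the complex roots of a polynomial, and there is no such formula once $\G$ is noncommutative. One must show that the $\phi$-leading term of $\det(A)$ is not washed out in the analytic asymptotics of $\det_\G(A_t)$, i.e.\ rule out Fuglede-Kadison cancellation that would make $\det_\G(A_t)$ grow strictly slower than $t^M$. A natural route is L\"uck's approximation theorem, writing $\det_\G$ as a limit of determinants over finite quotients of the amenable group $\G$ and reducing to the Mahler case, but making the leading-term behaviour survive this approximation uniformly in $t$---and simultaneously verifying the existence of the limits, which is Conjecture~\ref{conj:monomial}(a)---is the crux of the matter.
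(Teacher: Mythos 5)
The statement you set out to prove is not a theorem of the paper: it is Conjecture \ref{conj:l2higherorder}, which the authors explicitly leave open (``which we will discuss in more detail in a future paper''). So there is no proof of it in the paper to compare yours against; the only question is whether your argument settles the conjecture, and it does not --- as you yourself concede in your final paragraph. That said, your reduction is the natural one and runs exactly parallel to the paper's treatment of the abelian case: a deficiency-one presentation gives a Fox Jacobian $A$ with $\tau(K,\g)\doteq \det(A)\cdot(\g(x_1)-1)^{-1}$ and $\tautwo(K,\g)(t)\doteq \det_\G(A_t)\cdot \det_\G(t\,\g(x_1)-1)^{-1}$, the edge factors each contribute $-1$ to the degree, and everything comes down to the single identity $\deg\bigl(t\mapsto \det_\G(A_t)\bigr)=\deg\bigl(\det(A)\bigr)$. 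For $\G=\Z$ this is precisely Proposition \ref{prop:l2ordinaryalex}, i.e.\ Equalities (\ref{equ:dettwomahler}) and (\ref{equ:jensen}), so you have correctly isolated where the difficulty lives.

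The gaps are two, and one of them infects even the half of the argument you present as solid. First, the lower bound (``no Fuglede--Kadison cancellation'') is, as you say, the crux: nothing in your sketch addresses it, L\"uck approximation gives no control that is uniform in $t$, so the $t\to\infty$ limit and the approximation limit cannot simply be exchanged; moreover the $L^2$-side degree is only defined via $\limsup$/$\liminf$, and your reading of it as a genuine leading-term width presupposes Conjecture \ref{conj:monomial}(a), which is equally open. Second, your upper bound is not well-posed as stated: factoring $\det(A)$ in the Novikov completion of $\K(\G)$ produces a unit which is an infinite series of positive $\phi$-weight, hence not an element of (a matrix ring over) $\R[\G]$. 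The Fuglede--Kadison determinant used in this paper is defined only for matrices over $\R[\G]$, and the multiplicativity property (f) applies only to full-rank matrices over $\R[\G]$; the phrase ``a $\phi$-positive Novikov unit has bounded Fuglede--Kadison determinant'' therefore has no meaning in the available framework, and the upper bound would itself require a genuinely new argument (in the cited companion work such degree bounds are established only for virtually abelian targets, cf.\ Theorem \ref{thm:lowerboundgenus}, not for general torsion-free elementary-amenable $\G$). In short: correct reduction, correct identification of the obstacle, but no proof --- consistent with the statement being a conjecture rather than a theorem.
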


In Theorem \ref{thm:higherorderincrease} we saw that `the bigger the quotient - the better the lower bound on the genus'. One problem with higher-order Alexander torsions is that there is no maximal torsion-free elementary-amenable quotient
of a knot group. On the other hand such a problem does not exist for the $L^2$-Alexander torsion, we can just  use the identity of the fundamental group.
The corresponding full $L^2$-Alexander torsion should thus give the best possible lower bound on the genus, and we expect that it in fact always determines the knot genus. More precisely, we propose the following conjecture.

\begin{conjecture}\label{conj:detectsgenus} 
For any oriented knot $K$ the full $L^2$-Alexander torsion  satisfies
\[ \deg\left(\tautwo(K)(t)\right)=2\genus(K)-1.\] 
\end{conjecture}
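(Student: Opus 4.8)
\textbf{The plan} is to prove the conjectured equality by bracketing $\deg(\tautwo(K)(t))$ between $2\genus(K)-1$ from above and below, and to identify this common value with the Thurston norm $\|\phi_K\|_T$ of the generator $\phi_K\colon\pi_K\to\Z$, which for a nontrivial knot equals $2\genus(K)-1$ (the unknot being handled directly from $\tautwo(\mbox{unknot})(t)\doteq\max\{1,t\}^{-1}$). By the symmetry relation (\ref{equ:l2sym}) the behaviour of $\tautwo(K)(t)$ as $t\to 0$ is determined by its behaviour as $t\to\infty$, so it suffices to control the single leading exponent $D=\limsup_{t\to\infty}\ln(\tautwo(K)(t))/\ln(t)$; throughout I would assume Conjecture \ref{conj:monomial}(a), so that $D$ is a genuine limit and $\deg$ is additive under the alternating product that defines $\tautwo$.

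For the upper bound I would fix a minimal genus Seifert surface $\Sigma$ with $\chi(\Sigma)=1-2\genus(K)$ and use it to equip $X_K$ with an efficient CW structure (or handle decomposition) adapted to $\Sigma$, so that the boundary matrices $A_i(t)$ over $\R[\pi_K][t^{\pm1}]$ have $t$-degrees governed by the intersections of cells with $\Sigma$. Bounding each $\det_{\pi_K}(A_i(t))$ by the $t$-degree of its top coefficient matrix and summing with signs then yields $\deg(\tautwo(K)(t))\le-\chi(\Sigma)=2\genus(K)-1$. This is the analogue of Inequality (\ref{equ:degtau}) and of the first part of Theorem \ref{thm:twialexdetectsgenus}, and it extends Theorem \ref{thm:lowerboundgenus} from virtually abelian quotients to the full group; the only new input is that the degree estimate for $\det_{\pi_K}$ is insensitive to the ambient group, which follows from property (c) of the Fuglede-Kadison determinant together with a limiting argument.

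The lower bound is the crux, and I see two routes. The first is an approximation-and-monotonicity route: knot groups are residually torsion-free elementary amenable (Conjecture \ref{qu:restfea}, known virtually by Wise and Agol), giving a cofinal family of admissible epimorphisms $\g\colon\pi_K\to\Gamma$ onto torsion-free elementary amenable groups. Granting Conjecture \ref{conj:l2higherorder} one has $\deg\tautwo(K,\g)(t)=\deg(\tau(K,\g))$, and the genus detection underlying Theorem \ref{thm:twialexdetectsgenus}, combined with the monotonicity of Theorem \ref{thm:higherorderincrease}, pushes these degrees up to $2\genus(K)-1$ through a suitable tower. It would then remain to establish an $L^2$-monotonicity principle---that the degree of the full torsion dominates that of every quotient torsion---yielding $\deg(\tautwo(K)(t))\ge 2\genus(K)-1$. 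The second route is to identify $\deg(\tautwo(K)(t))$ with the width in the $\phi_K$-direction of the $L^2$-Alexander torsion polytope and to prove that this polytope dominates the dual Thurston norm polytope, recovering $\|\phi_K\|_T=2\genus(K)-1$ directly.

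\textbf{The hard part} will be exactly this lower bound. In the fibered case Theorem \ref{thm:tautwofibered} supplies the exact degree through an explicit monodromy computation, but for a general knot there is no comparable model, and the essential difficulty is to rule out degree collapse when one passes to the full group $\pi_K$. Both routes reduce the problem to a genuinely new theorem: either an $L^2$-analogue of the monotonicity of Theorem \ref{thm:higherorderincrease} (degree nondecreasing under enlarging the quotient, in the spirit of Conjecture \ref{conj:l2higherorder}), or a Thurston-norm lower bound for the $L^2$-torsion polytope. Either statement appears to require the full strength of the Friedl--L\"uck theory of universal $L^2$-torsion together with the Wise--Agol virtual specialness results, and proving it rigorously is the heart of the conjecture.
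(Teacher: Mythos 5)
You should first be aware that the statement you were asked to prove is not proved in the paper at all: it is stated there as an open conjecture (Conjecture \ref{conj:detectsgenus}), and the only evidence the authors offer is partial --- Theorem \ref{thm:lowerboundgenus} (the upper bound, but only for admissible epimorphisms onto \emph{virtually abelian} quotients), Theorem \ref{thm:tautwofibered} (the fibered case), Proposition \ref{prop:l2ordinaryalex} (the abelianization), and the result announced from \cite{DubFL14a} that for every knot \emph{some} virtually abelian quotient $\g\co\pi_K\to\G$ achieves $\deg\left(\tautwo(K,\g)(t)\right)=2\genus(K)-1$. So there is no proof in the paper to compare yours against, and your proposal must stand on its own as a proof --- which it does not, as you yourself concede in your final paragraph.

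The gaps are concrete and load-bearing. First, everything is conditional on Conjecture \ref{conj:monomial}(a), which is itself open; without it even the quantity $\deg\left(\tautwo(K)(t)\right)$ is only a $\limsup$/$\liminf$ expression with no additivity under the alternating product defining $\tautwo$. Second, your upper bound hinges on the claim that "the degree estimate for $\det_{\pi_K}$ is insensitive to the ambient group, which follows from property (c) of the Fuglede--Kadison determinant together with a limiting argument"; but property (c) concerns a \emph{subgroup} inclusion $\G\leq G$ (the matrix has entries in $\R[\G]$ and the determinant is computed over the bigger group), whereas what you need is a comparison between $\det_{\pi_K}$ and determinants over \emph{quotients} of $\pi_K$. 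Continuity of Fuglede--Kadison determinants along towers of quotients is precisely the (open) approximation problem for $L^2$-determinants --- it is the reason the paper must treat "determinant class" as a black box and restrict Theorem \ref{thm:lowerboundgenus} to virtually abelian targets --- so the "limiting argument" is the entire difficulty, not a routine step. Third, your lower bound is explicitly deferred to an unproven $L^2$-monotonicity principle together with Conjectures \ref{conj:l2higherorder} and \ref{qu:restfea} and the paper's genus-detection conjecture for higher-order torsions; each of these is itself one of the paper's open conjectures, so this route reduces one open problem to a conjunction of several others (plus Theorem \ref{thm:higherorderincrease}, whose $L^2$-analogue is exactly what is missing). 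As a research program your outline is sensible and close in spirit to the evidence the authors cite, but as a proof it has no complete step: both the upper and the lower bound fail at the point where the Fuglede--Kadison determinant over the full group $\pi_K$ must be related to computable quotient data.
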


We are quite optimistic about this conjecture, in fact in \cite{DubFL14a}
we use the work of Agol \cite{Ag08}, Przytycki--Wise \cite{PW12} and Wise \cite{Wi09,Wi12a,Wi12b} to show that given any oriented knot $K$ there exists an epimorphism
$\g\co \pi_K\to \G$ onto a virtually abelian group with $\deg\left(\tautwo(K,\g)(t)\right)=2\genus(K)-1$.
\medskip

Theorem \ref{thm:tautwofibered} also shows that $L^2$-Alexander torsions have information about fiberedness. We therefore ask the following question.

\begin{question}\label{question:detectsfib} 
If $K$ is an oriented knot  such that  the full $L^2$-Alexander torsion $\tautwo(K)(t)$ is monic, does this imply that $K$ is fibered?
\end{question}

The attentive reader might have noticed that this is phrased as a question, thus somewhat less optimistically than our conjectures. In fact there is some computational evidence, due to Ben Aribi, that the answer might in fact be no.

If $K$ is not fibered, then it is an interesting problem to determine what (if any) geometric or dynamic  information is contained in the top coefficient of $\tautwo(K)(t)$. 
We now turn to the case that $K$ is fibered. 
We conjecture that the full 
 $L^2$-Alexander torsion determines the dilatation of the monodromy.
 More precisely, we propose the following conjecture.

\begin{conjecture}\label{conj:dilatation}
Let $K$ be a fibered oriented knot with monodromy $f$. We normalize $\tautwo(K)(t)$ such that $\tautwo(K)(1)=1$. Then
\[ \sup\left\{T\in \Q_{\geq 0}\,|\, \tautwo(K)(t)|_{(0,T)}\mbox{ is constant}\right\}=\exp(-h(f)).\]
\end{conjecture}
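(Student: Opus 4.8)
The plan is to exploit the fibered structure directly. I would write $X_K$ as the mapping torus of the monodromy $f\co \S\to \S$, where $\S$ is the fiber surface, a compact surface with one boundary component and free fundamental group $F=\pi_1(\S)$ of rank $2\genus(K)$, so that $\pi_K=F\rtimes_{f_*}\Z$. Choosing the standard CW-structure on the mapping torus (the cells of $\S$ together with their suspensions), the twisted chain complex $C_*^{\g_t}(X_K;\R[\pi_K])$ becomes the algebraic mapping cone of the monodromy, and one computes
\[ \tautwo(K)(t)\doteq \det_{\pi_K}\!\big(t\,\tilde f_1-\id\big)\cdot \det_{\pi_K}\!\big(t\,\tilde f_0-\id\big)^{-1}, \]
where $\tilde f_i$ is the operator induced by $f$ on $C_i(\tilde\S)$ viewed over $\R[\pi_K]$; here $\tilde f_1$ is a $2\genus(K)\times 2\genus(K)$ matrix of Fox derivatives over $\R[F]$, while $\tilde f_0-\id$ reduces to multiplication by $t\mu-1$ for the meridian $\mu$ and contributes the familiar $\max\{1,t\}$ factor. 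Since $\pi_K$ is residually amenable (as a $3$-manifold group, by \cite{Hem87}), Theorem \ref{thm:tautwofibered} applies with $\g=\id$ and provides a representative that is constant on $\big(0,\exp(-h(f))\big)$ and equals $t^{2\genus(K)-1}$ on $\big(\exp(h(f)),\infty\big)$; this is the representative to which the statement refers, and the normalization $\tautwo(K)(1)=1$ fixes the remaining positive scalar. In particular the supremum in question is at least $\exp(-h(f))$, which is the easy inequality.

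The content of the conjecture is therefore the reverse inequality: that $\tautwo(K)(t)$ genuinely \emph{fails} to be constant on any interval $\big(0,\exp(-h(f))+\eps\big)$. By the symmetry (\ref{equ:l2sym}) and the conjectured convexity of $\log\tautwo(K)(e^s)$ (Conjecture \ref{conj:monomial}(c)), it suffices to locate the left-most ``corner'' of the function, i.e.\ the smallest $t_0$ beyond which the numerator $\det_{\pi_K}(t\,\tilde f_1-\id)$ starts to grow. I would control this via the $L^2$-analogue of Jensen's formula: writing the Fuglede--Kadison determinant as an integral of $\log|t\zeta-1|$ against the Brown measure of the (non-normal) operator $\tilde f_1$ over the free-group factor $\NN(F)$, one sees that $\det_{\pi_K}(t\,\tilde f_1-\id)$ is constant precisely for $t<1/\rho$, where $\rho$ is the outer radius of the support of that Brown measure, equivalently the $\ell^2$-spectral radius of $\tilde f_1$. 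The whole statement then reduces to the identity $\rho=\exp(h(f))$.

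To establish $\rho=\exp(h(f))$ I would pass to a train-track representative of $f$ in the sense of Bestvina--Handel, so that $\tilde f_1$ is, up to bounded correction, governed by the Perron--Frobenius transition matrix of the train-track map, whose leading eigenvalue is exactly the dilatation $\lambda=\exp(h(f))$. The upper bound $\rho\le\lambda$ should follow by a direct operator-norm estimate: the $\ell^2$-norm of $\tilde f_1^{\,n}$ is dominated by the $\ell^1$-norm of its coefficients, which grows like $\lambda^n$ by the train-track count. The lower bound $\rho\ge\lambda$ is more delicate, and this is where the non-amenability of $F$ is essential: because distinct reduced words label orthogonal basis vectors in $\ell^2(F)$, the $\lambda^n$ terms produced by iterating $\tilde f_1$ cannot cancel, forcing the spectral radius up to $\lambda$. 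For a monodromy that is only reducible rather than pseudo-Anosov, I would first cut along the canonical reducing system, apply the gluing/multiplicativity formula for $L^2$-Alexander torsions along the JSJ decomposition from \cite{DubFL14a}, and observe that $h(f)$ is the maximum of the entropies of the pieces, so that the left-most corner is contributed by the fastest pseudo-Anosov piece.

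The main obstacle is precisely the identity $\rho=\exp(h(f))$, and within it the lower bound $\rho\ge\lambda$: Brown measures of operators such as $\tilde f_1$ over a free-group von Neumann algebra are genuinely hard to compute, and ruling out the $\ell^2$-cancellation that would shrink the support radius below $\lambda$ requires a quantitative ``no-cancellation'' input, namely a sharp lower bound on $\|\tilde f_1^{\,n}v\|_{2}$ that goes beyond the train-track count of total word length. Converting the sharp geometric growth rate of the pseudo-Anosov into a sharp $\ell^2$-operator-theoretic statement over a non-amenable group is the crux of the argument.
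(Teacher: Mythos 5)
This statement is one of the paper's \emph{conjectures}, not one of its theorems: the paper offers no proof of it, and explicitly says that only the inequality $\geq$ is known (via Theorem \ref{thm:tautwofibered}), that the inequality $\leq$ ``seems significantly harder'', and that the only evidence for it is the torus-knot computation (\ref{equ:dw}) and the abelianized case, Corollary \ref{cor:l2ordinaryalex}. Your proposal reproduces exactly the part the paper knows: applying Theorem \ref{thm:tautwofibered} with $\g=\id$ is legitimate (knot groups are $3$-manifold groups, hence residually amenable), and this gives $\geq$. Your mapping-torus reduction of the hard half is also a reasonable reformulation: up to determinant-class issues, the question becomes whether the outer radius $\rho$ of the support of the Brown measure of $A=\mu\,\tilde f_1$ over $\NN(\pi_K)$ equals $\exp(h(f))$. (One repair is needed even here: constancy of $t\mapsto \det_{\pi_K}(\id-tA)$ below $1/\rho$ does not follow from harmonicity alone, since logarithmic potentials are harmonic but not constant off the support; you need rotation invariance of the Brown measure, which does hold because $g\mapsto e^{i\theta\phi_K(g)}g$ induces a trace-preserving automorphism of $\NN(\pi_K)$ carrying $A$ to $e^{i\theta}A$.) But a reformulation is all this is: the identity $\rho=\exp(h(f))$ is left unproven, as you yourself concede, and that identity \emph{is} the open content of the conjecture. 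So the proposal is a research program with the crux missing, not a proof.

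Moreover, the heuristic you offer for the missing lower bound $\rho\geq\lambda$ points in the wrong direction. Orthogonality of distinct group elements in $\ell^2(\pi_K)$ does not force norms up; it forces them \emph{down} relative to $\ell^1$ counts: a sum of roughly $\lambda^n$ distinct group elements with bounded coefficients has $\ell^2$-norm of order $\lambda^{n/2}$, not $\lambda^n$. This is precisely the phenomenon behind Kesten's theorem that random walks on non-amenable groups have spectral radius strictly below their $\ell^1$ growth rate, so ``no cancellation because words are orthogonal'' yields at best a bound of the shape $\sqrt\lambda$ and cannot give $\rho\geq\lambda$. In addition, the outer radius of the Brown measure support can be strictly smaller than the operator-norm spectral radius, so even a sharp spectral-radius computation for $A$ would not close the argument. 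Finally, the reducible case leans on JSJ gluing formulas for $L^2$-Alexander torsions that are themselves part of the (then unpublished) program in \cite{DubFL14a}, and the convexity you invoke is Conjecture \ref{conj:monomial}(c), i.e.\ also unproven. Your proposal correctly isolates where the difficulty lives, but it does not overcome it; as far as this paper is concerned, the statement remains open.
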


Note that by Theorem \ref{thm:tautwofibered} we know that the inequality `$\geq$' holds. Proving the inequality `$\leq$' seems significantly harder.

We have two  pieces of evidence for the  inequality `$\leq$' of the conjecture. First of all,
it follows from Equation (\ref{equ:dw}) that it holds if the monodromy has finite order, i.e., if $K$ is a torus knot.
 Furthermore, if we consider 
$\tautwo(K,\phi_K)(t)$ with the  normalization  $\tautwo(K)(1)=1$,  then
it follows fairly easily from Corollary \ref{cor:l2ordinaryalex} that 
\[ \sup\left\{T\in \Q_{\geq 0}\,|\, \tautwo(K,\phi_K)|_{(0,T)}\mbox{ is constant}\right\}=\exp(-m),\]
where $m$ is the maximal absolute value of an eigenvalue of the induced 
automorphism $f_*$ of $H_1(\S;\R)$. But it is well-known e that $m=h(f_*\co H_1(\S;\Z)\to H_1(\S;\Z))$. 
\medskip

In Section \ref{section:abelian} we saw that $L^2$-Alexander torsions can not distinguish certain pairs of torus knots. 
Our final question now asks whether at least hyperbolic knots are determined by their full $L^2$-Alexander torsion. Here one has to be a little careful about what one means by `determine'. We say that two knots $J$ and $K$ are equivalent if there exists a diffeomorphism $h$ of $S^3$ with $h(J)=K$ as sets, i.e. we do not demand that the orientations match. It is straightforward to see that the full $L^2$-Alexander torsions of equivalent knots are equivalent functions.
We propose the following question.

\begin{question}
If $J$ and $K$ are  two oriented hyperbolic knots with equivalent  full $L^2$-Alexander torsions,
does this imply that $J$ and $K$ are equivalent?
\end{question}

A typical example for pairs of knots which are difficult to distinguish is given by mutants.
Many invariants, e.g. the Alexander polynomial and the Jones polynomial do not distinguish mutants. It follows from  work of Ruberman \cite{Ru87}
and Equation (\ref{equ:tauvol2}) that the evaluation of the full $L^2$-Alexander torsion at $t=1$ stays invariant under mutation. On the other hand the genus is not invariant under mutation,
for example the Conway knot has genus 3 and its mutant, the Kinoshita-Terasaka knot, has genus 2. 
In light of Conjecture \ref{conj:detectsgenus}  we thus expect that in general the full $L^2$-Alexander torsion is not invariant under mutation.

\end{document}